\theoremstyle{plain}
\newtheorem{theorem}{Theorem}[section]
\newtheorem*{thma}{Theorem 1.3}
\newtheorem{lemma}[theorem]{Lemma}
\newtheorem{proposition}[theorem]{Proposition}
\theoremstyle{definition}
\theoremstyle{remark}
\newcommand{\Z}{\mathbb{Z}}
\newcommand{\SL}{{\text {\rm SL}}}
\begin{document}

\author{Paul Jenkins}
\address{Department of Mathematics, Brigham Young University, Provo,
UT 84602} \email{jenkins@math.byu.edu}
\author{Kyle Pratt}
\address{Department of Mathematics, Brigham Young University, Provo,
UT 84602} \email{kvpratt@gmail.com}
\title{Interlacing of Zeros of Weakly Holomorphic Modular Forms}

\date \today

\begin{abstract}
We prove that the zeros of a family of extremal modular forms interlace, settling a question of Nozaki. Additionally, we show that the zeros of almost all forms in a basis for the space of weakly holomorphic modular forms of weight $k$ for $\SL_2(\mathbb{Z})$ interlace on most of the lower boundary of the fundamental domain.
\end{abstract}
\subjclass[2010]{11F11, 11F03}

\maketitle
\section{Introduction and Main Results}
A natural question in studying functions of a complex variable is to determine the location of the zeros of a function; an especially interesting case occurs when the locations of the zeros follow a strong pattern.  Many modular forms have zeros satisfying such properties.  The most well-known such result comes from F. Rankin and Swinnerton-Dyer \cite{RSD70}, who proved that all zeros of the classical Eisenstein series in the standard fundamental domain $\mathcal{F}$ lie on the circular arc $\mathcal{A}= \left\lbrace e^{i \theta} : \frac{\pi}{2} \leq \theta \leq \frac{2\pi}{3} \right\rbrace$ on the lower boundary of $\mathcal{F}$.

When two functions have zeros that lie on the same arc, we say that the zeros of two functions \emph{interlace} if every zero of one function is contained in an open interval whose endpoints are zeros of the other function, and each such interval contains exactly one zero.  Gekeler conjectured that the Eisenstein series $E_k(z)$ satisfy such interlacing properties in \cite{Gek01}, and Nozaki~\cite{Noz08} proved that the zeros of $E_k(z)$ interlace with the zeros of $E_{k+12}(z)$ by improving the bounds used by Rankin and Swinnerton-Dyer.  For the modular function $j_n(z)$ given by the action of the $n$th Hecke operator on $j(z) - 744$, Jermann \cite{Jer12} extended work of Asai, Kaneko, and Ninomiya \cite{AKN97} to prove that the zeros of $j_n(z)$ interlace with the zeros of $j_{n+1}(z)$.  In this paper, we prove interlacing for a family of holomorphic modular forms for $\SL_2(\Z)$, all of whose zeros in $\mathcal{F}$ lie on the arc $\mathcal{A}$.

Denote by $M_k$ the space of holomorphic modular forms of weight $k$, and write $M_k^{!}$ for the larger space of weakly holomorphic modular forms (i.e. poles are allowed at the cusps) of weight $k$.  For such weights $k$, we write $k = 12\ell + k'$, with $k' \in \{0,4,6,8,10,14\}$ and $\ell \in \mathbb{Z}$.
Duke and the first author \cite{DJ08} introduced a canonical basis $\{f_{k, m}(z)\}_{m=-\ell}^\infty$ for $M_k^{!}$ whose elements are defined by
\[f_{k,m}(z) = q^{-m} + O(q^{\ell+1}), \] where, as usual, $q = e^{2\pi i z}$.
They approximated $f_{k,m}(z)$ on the boundary arc $\mathcal{A}$ by a trigonometric function to prove the following theorem locating the zeros of these basis elements.
\begin{theorem}[\cite{DJ08}, Theorem 1] \label{DJTheorem}
If $m \geq |\ell| - \ell$, then all the zeros of $f_{k,m}(z)$ in $\mathcal{F}$ lie on $\mathcal{A}$.
\end{theorem}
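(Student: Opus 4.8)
The plan is to adapt the method of Rankin and Swinnerton-Dyer \cite{RSD70}.

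\textbf{Step 1 (counting the zeros).} Since $f_{k,m}(z)=q^{-m}+O(q^{\ell+1})$ is holomorphic on $\mathbb{H}$ and has a pole of order exactly $m$ at the cusp, the valence formula shows that the total order of the zeros of $f_{k,m}$ in $\mathcal{F}$, weighted by $\tfrac13$ at $\rho$ and $\tfrac12$ at $i$, equals $N:=\tfrac{k}{12}+m$; moreover the orders of vanishing forced at $i$ and $\rho$ depend only on $k'$ and are those of $\Delta^{\ell}E_{k'}$. It therefore suffices to exhibit zeros of $f_{k,m}$ on the closed arc $\mathcal{A}$ of total weighted order at least $N$, with the forced behavior at the endpoints, since any further zero in $\mathcal{F}$ would violate this count; the same count then forces all zeros to be simple.

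\textbf{Step 2 (a real function on the arc).} Parametrize $\mathcal{A}$ by $z=e^{i\theta}$ with $\theta\in[\tfrac{\pi}{2},\tfrac{2\pi}{3}]$. Since $f_{k,m}$ has real Fourier coefficients, $\overline{f_{k,m}(z)}=f_{k,m}(-\bar z)$, and on $|z|=1$ we have $-\bar z=-1/z$, so modularity gives $\overline{f_{k,m}(e^{i\theta})}=f_{k,m}(-1/e^{i\theta})=e^{ik\theta}f_{k,m}(e^{i\theta})$. Hence
\[
F_{k,m}(\theta):=e^{ik\theta/2}f_{k,m}(e^{i\theta})
\]
is real-valued on $[\tfrac{\pi}{2},\tfrac{2\pi}{3}]$, and its zeros there are exactly the zeros of $f_{k,m}$ on $\mathcal{A}$; the problem becomes one of counting sign changes of a real function of one variable.

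\textbf{Step 3 (trigonometric approximation).} The two dominant contributions to $f_{k,m}$ on $\mathcal{A}$ are the principal part $q^{-m}$ at the cusp and its image under $z\mapsto-1/z$, obtained from $f_{k,m}(z)=z^{-k}f_{k,m}(-1/z)$ by expanding $f_{k,m}$ at the cusp in the variable $-1/z$. On $z=e^{i\theta}$ both have modulus $e^{2\pi m\sin\theta}$, and together they yield
\[
F_{k,m}(\theta)=2e^{2\pi m\sin\theta}\cos\!\left(\tfrac{k\theta}{2}-2\pi m\cos\theta+c_{k'}\right)+(\text{error}),
\]
with $c_{k'}$ a constant depending on $k'$ (to be pinned down in Step 4) and the error collecting all remaining terms of both expansions. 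The phase $\tfrac{k\theta}{2}-2\pi m\cos\theta$ has derivative $\tfrac{k}{2}+2\pi m\sin\theta$, which the hypothesis $m\ge|\ell|-\ell$ keeps positive throughout $\mathcal{A}$, so the phase increases monotonically by $\tfrac{k}{2}\cdot\tfrac{\pi}{6}+2\pi m\cdot\tfrac12=\pi N$ as $\theta$ runs from $\tfrac{\pi}{2}$ to $\tfrac{2\pi}{3}$; thus the main term changes sign $N$ times. Once the error is shown to be of strictly smaller order than $e^{2\pi m\sin\theta}$ away from the corner $z=\rho$, evaluating $F_{k,m}$ at the points where the cosine equals $\pm1$ produces $N$ sign changes of $F_{k,m}$ and hence at least $N$ zeros of $f_{k,m}$ on $\mathcal{A}$. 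Establishing this error bound is the crux of the argument: the tail coefficients of $f_{k,m}$ grow roughly like $e^{4\pi\sqrt{mn}}$, so a term-by-term estimate is too weak, and one must exploit the cancellation in these tails by bounding them as a whole---e.g.\ through the two-variable generating function for the $f_{k,m}$ or through explicit Rademacher-type expansions of their coefficients. This is exactly the estimate in which the hypothesis $m\ge|\ell|-\ell$ is used to make the inequalities close, notably when $\ell<0$, where the expansion at the cusp itself contributes several growing terms.

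\textbf{Step 4 (the corner and conclusion).} Near $\theta=\tfrac{2\pi}{3}$, where $|z+1|\to1$ and the approximation degrades, I would argue as in \cite{RSD70}: either the valence formula already absorbs the last sign change via the order of vanishing forced at $\rho$ (controlled by $k'\bmod3$), or a direct local estimate near $\rho$ recovers it; a symmetric remark applies at $i$ when $k'\bmod4$ forces a zero there, and $c_{k'}$ is chosen so that the first and last sign changes fall correctly relative to the endpoints. Combining Steps 1 and 3--4, the zeros on $\mathcal{A}$ already account for the full weighted order $N$ allowed by the valence formula, so $f_{k,m}$ has no zeros in $\mathcal{F}\setminus\mathcal{A}$ and all its zeros are simple, which is the assertion. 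I expect the uniform error bound in Step 3, together with the delicate analysis at $\rho$ in Step 4, to be the main obstacle.
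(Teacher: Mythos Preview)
Your outline matches the paper's strategy: normalize to the real function $e^{ik\theta/2}f_{k,m}(e^{i\theta})$, approximate by $2\cos\bigl(\tfrac{k\theta}{2}-2\pi m\cos\theta\bigr)$, and count sign changes against the valence formula. You also correctly flag that the error bound is the heart of the matter and that the two-variable generating function is the right tool. Where the paper (following \cite{DJ08}) is more concrete is in \emph{how} that tool is deployed: one writes
\[
f_{k,m}(z)=\int_{-\frac12+iA}^{\frac12+iA}\frac{\Delta(z)^{\ell}E_{k'}(z)E_{14-k'}(\tau)}{\Delta(\tau)^{\ell+1}\bigl(j(\tau)-j(z)\bigr)}\,e^{-2\pi i m\tau}\,d\tau
\]
and shifts the contour downward. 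The residues at $\tau=z$ and $\tau=-1/z$ produce exactly the two terms you identify heuristically, and the remaining integral over a fixed horizontal line is the error, bounded uniformly by an explicit quantity that decays in $\ell$ and $m$ (inequalities (\ref{bigIneq_1.9}) and (\ref{bigIneq2_7pi12}) in the paper). This sidesteps entirely the issue you raise about Fourier-coefficient growth: one never estimates the tail term by term.

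Two small corrections to your sketch. First, there is no phase shift $c_{k'}$: the residues at $z$ and $-1/z$ combine to give $2\cos\bigl(\tfrac{k\theta}{2}-2\pi m\cos\theta\bigr)$ on the nose, and the dependence on $k'$ enters only through the error integrand and the forced vanishing at $i,\rho$. Second, the corner at $\rho$ is not handled by a separate local estimate as in \cite{RSD70}; instead, when $\theta$ is near $\tfrac{2\pi}{3}$ one lowers the contour further and picks up an additional residue at $\tau=z/(z+1)$, contributing the explicit term $(-1)^m e^{-\pi m(2\sin\theta-\tan(\theta/2))}/(2\cos(\theta/2))^k$. This extra term, rather than an ad hoc corner argument, is what closes the count near $\rho$.
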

\noindent The condition in Theorem \ref{DJTheorem} that $m \geq |\ell| - \ell$ is not sharp, but for all large enough weights $k$,  the zeros of at least one of the $f_{k,m}(z)$ do not all lie on $\mathcal{A}$.  The theorem is generally false when $f_{k, m}$ is a cusp form; one example is the form $f_{132,-9}(z)$.

Getz studied a subset of these basis elements in \cite{Getz04}. We call this family of modular forms ``gap functions'', as they are the holomorphic modular forms with the maximum possible gap in their $q$-expansions.  For any even weight $k \geq 4$, these functions are defined as
\[G_k(z) = f_{k,0}(z) =  1 + O(q^{\ell + 1}). \]
These gap functions have application to the theory of extremal lattices and questions in coding theory; see for example \cite{JR11}.

Our first result proves that the zeros of these gap functions, all of which lie on $\mathcal{A}$, interlace.
\begin{theorem}\label{BigTheorem1}
Let $k$ be an even integer with $k \geq 4$. Then the zeros of $G_k(z)$ interlace on $\mathcal{A}$ with the zeros of $G_{k+12}(z)$ on $\mathcal{A}$.
\end{theorem}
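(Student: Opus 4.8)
The plan is to adapt the trigonometric approximation method of Rankin and Swinnerton-Dyer, as refined by Duke and the first author and by Nozaki. Parametrize $\mathcal{A}$ by $\theta\mapsto e^{i\theta}$, $\theta\in[\pi/2,2\pi/3]$. Since $G_k$ has real Fourier coefficients, the function
\[
g_k(\theta)\;:=\;e^{ik\theta/2}\,G_k\!\left(e^{i\theta}\right)
\]
is real-valued on this interval (it equals its own complex conjugate by the $z\mapsto-1/z$ transformation). First I would reconstruct, from the analysis underlying Theorem~\ref{DJTheorem}, an approximation
\[
g_k(\theta)\;=\;2\cos\!\big(\Theta_k(\theta)\big)+E_k(\theta),
\]
where $\Theta_k$ is an explicit smooth, monotone function of $\theta$ — obtained from the valence formula together with the principal part of $G_k$ at the cusp, with $\Theta_k(\pi/2)$ and $\Theta_k(2\pi/3)$ known exactly — and the error $E_k(\theta)$ is the tail contribution of the Fourier expansion along the arc, which one bounds by $|E_k(\theta)|<2$ away from small neighborhoods of the points where $\Theta_k\in\tfrac{\pi}{2}+\pi\Z$. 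Combined with the valence formula (which forces $G_k$ to have exactly $\ell$ zeros on the open arc, together with prescribed zeros at $i$ and at $\rho=e^{2\pi i/3}$ depending only on $k'$), this shows that $g_k$ has exactly one sign change — hence exactly one zero — near each solution of $\Theta_k(\theta)\in\tfrac{\pi}{2}+\pi\Z$ in $(\pi/2,2\pi/3)$, and no other zeros.

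Next I would compare the two weights. Passing from $k$ to $k+12$ multiplies the automorphy factor $e^{ik\theta/2}$ by $e^{6i\theta}$ and shifts the main-term phase by a fixed monotone function $\delta(\theta):=\Theta_{k+12}(\theta)-\Theta_k(\theta)$. Since $G_{k+12}$ has exactly one more zero on the open arc than $G_k$, the total variation of $\delta$ across $[\pi/2,2\pi/3]$ is exactly $\pi$; and because $\Theta_k$ and $\Theta_{k+12}$ are monotone, a Sturm-type counting argument shows that the \emph{predicted} zeros — the solutions of $\Theta_k(\theta)\in\tfrac{\pi}{2}+\pi\Z$ and of $\Theta_{k+12}(\theta)\in\tfrac{\pi}{2}+\pi\Z$ — strictly interlace, provided $\delta$ ranges over an interval of length $\pi$ that is correctly positioned, which amounts to checking that the two phases are roughly matched at one end of the arc. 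This is where the values of $G_k$ and $G_{k+12}$ at $i$ and at $\rho$ enter: the forced orders of vanishing there are the same for $k$ and $k+12$, and pin $\Theta_k$ and $\Theta_{k+12}$ at the endpoints so that $\delta$ has the required range. Finally I would transfer the interlacing to the genuine zeros of $g_k$ and $g_{k+12}$ by a quantitative version of the error bound: each true zero lies within a radius of its predicted location that is strictly less than the minimal gap between a predicted zero of $G_k$ and one of $G_{k+12}$, so the interlacing is not destroyed by the perturbation.

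I expect the crux of the argument to be the uniformity and sharpness of the error estimates. For large $k$ everything is comfortable: $|E_k(\theta)|$ decays like $e^{-2\pi(\ell+1)\sin\theta}$ up to a polynomial factor, the main term dominates away from its zeros by a wide margin, and the predicted zeros are well separated; but Theorem~\ref{BigTheorem1} asserts interlacing for \emph{every} even $k\ge4$, and the low-weight cases (where $\ell$ is $0$ or $1$, the error term is not small, and only one or two zeros are in play) will require Nozaki-style improvements to the tail bound together with a small explicit verification for the first few weights. A second delicate point is the bookkeeping at the corners $\theta=\pi/2$ and $\theta=2\pi/3$: depending on $k'\in\{0,4,6,8,10,14\}$ the forms $G_k$ and $G_{k+12}$ may each vanish at $i$ or at $\rho$, so these forced zeros must be built into the interlacing statement and matched against the endpoint phase values; handling all six residue classes uniformly is where the valence-formula input has to be used most carefully. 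Once the error bound is shown to beat the predicted-zero spacing uniformly in $k$, the interlacing of the zeros of $G_k$ and $G_{k+12}$ on $\mathcal{A}$ follows.
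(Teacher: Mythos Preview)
Your outline captures the broad strategy correctly---approximate $g_k(\theta)=e^{ik\theta/2}G_k(e^{i\theta})$ by a cosine, prove the approximating cosines interlace, and transfer via an error bound small enough to preserve the interlacing, with a finite computation for small $k$. For $G_k=f_{k,0}$ the phase is simply $\Theta_k(\theta)=k\theta/2$, so your $\delta(\theta)=6\theta$ does indeed sweep out exactly $\pi$ over $[\pi/2,2\pi/3]$, and the interlacing of the cosine zeros is elementary.

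The genuine gap is your treatment of the error near $\theta=2\pi/3$. You assert that $|E_k(\theta)|$ decays like $e^{-2\pi(\ell+1)\sin\theta}$ and that for large $k$ ``everything is comfortable''; but this is false near the corner $\rho$. In the contour-integral derivation underlying Theorem~\ref{DJTheorem}, lowering the horizontal contour past height $\sin\theta$ picks up not only the residues at $\tau=z$ and $\tau=-1/z$ (which give the $2\cos(k\theta/2)$ main term) but also, once $\theta>7\pi/12$ or so, a residue at $\tau=z/(z+1)$ contributing the term $(2\cos(\theta/2))^{-k}$. This term tends to $1$ as $\theta\to 2\pi/3$ and does \emph{not} decay with $k$; it is of the same order as the main term, so it cannot be absorbed into $E_k$. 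The paper therefore splits $I$ into two overlapping subintervals, uses the pure cosine on $(\pi/2,1.9]$, and on $[7\pi/12,2\pi/3)$ replaces the main term by
\[
H_k(\theta)=2\cos\!\left(\tfrac{k\theta}{2}\right)+\bigl(2\cos(\tfrac{\theta}{2})\bigr)^{-k}.
\]
One must then prove separately that the zeros of $H_k$ and $H_{k+12}$ interlace, which is where Nozaki's lemmas are actually used (for all large $k$, not just the low-weight cases you mention), and then obtain lower bounds on the spacing between zeros of $H_k$ and $H_{k+12}$---a fairly delicate case analysis in $k'\pmod{12}$---against which the now genuinely exponentially small remainder is compared. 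Without this second approximating function and the accompanying analysis, your perturbation step fails uniformly in $k$ on a neighborhood of $2\pi/3$.
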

\noindent This theorem settles a question of Nozaki, who suggested that the zeros of these functions ``interlace in some range" (see \cite{Noz08}, Example 1.1).

Additionally, we are able to partially extend our results to the larger class of functions $f_{k,m}(z)$. The result is the following theorem.
\begin{theorem}\label{BigTheorem2}
Let $\epsilon > 0$ and fix $m \geq 0$ (resp. $k \in \Z$). Then the zeros of $f_{k,m}(z)$ interlace with the zeros of $f_{k+12,m}(z)$ (resp. $f_{k,m+1}(z)$) on the arc
\[\mathcal{A}_\epsilon =  \left\lbrace e^{i \theta} : \frac{\pi}{2} < \theta < \frac{2\pi}{3} - \epsilon \right\rbrace \]
for $k$ (resp. $m$) large enough.
\end{theorem}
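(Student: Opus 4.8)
The plan is to use the trigonometric approximation of $f_{k,m}$ on the lower boundary arc that lies behind Theorem~\ref{DJTheorem}. Parametrize $\mathcal{A}$ by $z = e^{i\theta}$ with $\theta \in [\pi/2, 2\pi/3]$; then $e^{ik\theta/2} f_{k,m}(e^{i\theta})$ is real, and the method of Duke and the first author (extending Rankin--Swinnerton-Dyer) yields an expansion
\[ e^{ik\theta/2} f_{k,m}(e^{i\theta}) = 2\cos\bigl(\Phi_{k,m}(\theta)\bigr) + R_{k,m}(\theta), \]
in which $\Phi_{k,m}$ is an explicit real-analytic phase, strictly increasing in $\theta$, whose dependence on $m$ and on the weight enters, to leading order, through $m + k/12$ times a fixed function of $\theta$, the remaining dependence being only on $k' = k \bmod 12$; and $R_{k,m}$ is an error term. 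I would first note that for $k$ (resp.\ $m$) large the condition $m \geq |\ell| - \ell$ of Theorem~\ref{DJTheorem} is automatic, so all zeros of $f_{k,m}$ in $\mathcal{F}$ already lie on $\mathcal{A}$; hence it is enough to study the sign changes of the real function displayed above. The reason for cutting back to $\mathcal{A}_\epsilon$ is that $R_{k,m}$ misbehaves only near $\theta = 2\pi/3$: on $\mathcal{A}_\epsilon$ one has $R_{k,m}(\theta) \to 0$ uniformly as $k \to \infty$ with $m$ fixed, and likewise as $m \to \infty$ with $k$ fixed.

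I would then reduce the theorem to an interlacing statement for the main terms. Since $\Phi_{k,m}$ is increasing, the zeros of $2\cos\Phi_{k,m}$ on $\mathcal{A}_\epsilon$ are the points with $\Phi_{k,m}(\theta) \equiv \pi/2 \pmod{\pi}$, and there are $\asymp m + k/12$ of them. By the structure of $\Phi_{k,m}$, both phase differences $\Phi_{k+12,m} - \Phi_{k,m}$ and $\Phi_{k,m+1} - \Phi_{k,m}$ are, up to corrections that are controlled and small for the large parameter, one and the same fixed function of $\theta$, and one checks that this function takes values strictly between $0$ and $\pi$ throughout $\mathcal{A}_\epsilon$ --- the bound $< \pi$ is precisely what degenerates as $\theta \to 2\pi/3$ and dictates the cutoff. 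An elementary argument with monotone phases then shows that the zeros of $2\cos\Phi_{k,m}$ strictly interlace with those of $2\cos\Phi_{k+12,m}$ (resp.\ $2\cos\Phi_{k,m+1}$) on $\mathcal{A}_\epsilon$, together with a quantitative lower bound for the distance from each zero of one to the nearest zero of the other.

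What remains is to pass from the main terms back to $f_{k,m}$. This is the standard Rankin--Swinnerton-Dyer comparison: wherever $\lvert 2\cos\Phi_{k,m}(\theta)\rvert$ is not small it dominates $\lvert R_{k,m}(\theta)\rvert$, so each genuine zero of $f_{k,m}$ is trapped within a small neighborhood of a zero of $2\cos\Phi_{k,m}$, and each such neighborhood contains exactly one zero of $f_{k,m}$; the same holds for $f_{k+12,m}$ (resp.\ $f_{k,m+1}$). Combined with the interlacing of the main terms, this gives the asserted interlacing on $\mathcal{A}_\epsilon$, provided the radius of these neighborhoods --- of order $\lVert R_{k,m}\rVert_{\infty}/(m + k/12)$ --- is less than half the gap from the previous step; since that gap shrinks to order $1/(m + k/12)$ near $\theta = \pi/2$, this is where one must have $R_{k,m}$ decaying faster than $1/(m+k/12)$ on $\mathcal{A}_\epsilon$, and where "$k$ (resp.\ $m$) large enough" is spent. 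Reconciling the two zero-counts near the two ends of $\mathcal{A}_\epsilon$ is handled the same way.

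I expect the main obstacle to be the analytic input, not the interlacing combinatorics. Specifically: (i) one must establish the expansion above with the dependence of the phase on $k$ (and on $m$) made explicit and uniform enough to evaluate the phase differences, which essentially means re-running the Duke--Jenkins estimates with the parameters tracked rather than quoting Theorem~\ref{DJTheorem}; and (ii) one must bound $R_{k,m}$ on all of $\mathcal{A}_\epsilon$ by a quantity beating $1/(m + k/12)$ in both regimes, which are not the same problem: as $k \to \infty$ the error is driven by the tail $O(q^{\ell+1})$, exponentially small on $\mathcal{A}$, whereas as $m \to \infty$ it is driven by the cancellation inside the principal part $q^{-m}$ and calls for a finer estimate, most delicately near $\theta = \pi/2$.
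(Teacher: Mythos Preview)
Your overall strategy is the paper's: approximate the normalized real function by $2\cos(b(\theta))$, prove interlacing of the cosine zeros, then use decay of the remainder on $\mathcal{A}_\epsilon$ to transfer interlacing to $f_{k,m}$. The paper executes this by finding rational-in-$(k,m)$ lower bounds for the gap between adjacent cosine zeros (via linear Taylor approximations of $b(\theta)$ near the two endpoints, combined with Lemma~\ref{ShortestDist}), and comparing against an exponentially decaying error from the contour-integral representation.

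Two of your structural claims about the phase are wrong and would not survive actual computation. The phase is $b(\theta)=\tfrac{k\theta}{2}-2\pi m\cos\theta$; this is \emph{not} of the form ``$(m+k/12)$ times a fixed function of $\theta$ plus something depending only on $k'$'', since $k$ and $m$ multiply genuinely different functions of $\theta$. Likewise the two phase differences are \emph{not} ``one and the same fixed function'': one has $b_{k+12}(\theta)-b(\theta)=6\theta$ while $b_{m+1}(\theta)-b(\theta)=-2\pi\cos\theta$. What is true, and what the paper actually uses, is that after shifting the first by $3\pi$ both differences lie strictly in $(0,\pi)$ on the open interval $I$; this suffices for cosine interlacing, but the two cases must be checked separately (Lemma~\ref{CosZeros}).

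Your account of the error and of the reason for the cutoff is also off. The decay in $m$ does not come from ``cancellation inside the principal part $q^{-m}$''; it comes straight from the contour bound via the factor $e^{-2\pi m(\sin\theta-A')}$ with $A'<\sin\theta$, and this is \emph{best}, not most delicate, near $\theta=\pi/2$. The $\epsilon$-cutoff is not dictated by the phase difference hitting $\pi$ (the resulting small gap near either endpoint is still rational in $k,m$ and is beaten by exponential decay); it is dictated by the additional residue term $(2\cos(\theta/2))^{-k}e^{-\pi m(2\sin\theta-\tan(\theta/2))}$, which does not decay at $\theta=2\pi/3$. Restricting to $\theta\le\tfrac{2\pi}{3}-\epsilon$ restores uniform exponential decay of the remainder in both $k$ and $m$, and your comparison argument then goes through exactly as in the paper.
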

As the methods used in the proof of Theorem \ref{BigTheorem1} do not entirely apply to the case in which $m$ is nonzero, they do not give interlacing on all of $\mathcal{A}$, although preliminary computations suggest that such interlacing generally holds.  We leave this as an open problem.

\section{Background}\label{Prelims}
We begin by defining some notation.  The standard fundamental domain for $\SL_2(\mathbb{Z})$ is
\[\mathcal{F} = \left\lbrace|z| \geq 1, -\frac{1}{2} \leq \text{Re}(z) \leq 0 \right\rbrace \bigcup \left\lbrace |z| > 1, 0 \leq \text{Re}(z) < \frac{1}{2} \right\rbrace. \]Let $E_0 = 1$, and for even integers $k \geq 4$ let $E_k(z)$ be the usual weight $k$ Eisenstein series
\[E_k(z) = 1 - \frac{2k}{B_k} \sum_{n=1}^{\infty} \sigma_{k-1}(n) q^n \in M_k,  \]
where $B_k$ is the $k$th Bernoulli number and $\sigma_{k-1}(n) = \sum_{d|n} d^{k-1}$. We let \[j(z) = q^{-1} + 744 + 196884q + \cdots\] be the usual modular function in $M_0^!$, and $\Delta(z)$ be the weight 12 cusp form
\begin{equation*}
\Delta(z) = \frac{1}{1728}\left( E_4(z)^3 - E_6(z)^2 \right) = q - 24q^2 + 252q^3 - 1472q^4 + \cdots.
\end{equation*}
With this notation, the basis elements $f_{k, m}(z)$ can be explicitly constructed as \[ f_{k,m}(z) = \Delta^\ell E_{k'} F(j), \]
where $F(j)$ is a polynomial in $j(z)$ of degree $\ell+m$ such that $f_{k, m}(z)$ has the correct Fourier expansion.

The proof of Theorem \ref{DJTheorem} in \cite{DJ08} depends on integrating a generating function for the $f_{k, m}(z)$ to obtain
\[ f_{k,m}(z) = \frac{1}{2\pi i} \oint_C \dfrac{\Delta^\ell(z) E_{k'}(z) E_{14-k'}(\tau)}{\Delta^{1+\ell}(\tau) (j(\tau) - j(z))} r^{-1-m} \, dr, \]
where $r = e^{2\pi i \tau}$ and $C$ is a counterclockwise circle in the $r$-plane centered at 0 with sufficiently small radius. We fix $z = e^{i \theta}$ for some $\theta$ in the interval $I =(\frac{\pi}{2},\frac{2\pi}{3})$ and change variables $r \mapsto \tau$. Then for some $A > 1$ we have
\begin{equation*}
f_{k,m}(z) = \int_{-\frac{1}{2}+iA}^{\frac{1}{2}+iA} \frac{\Delta(z)^\ell}{\Delta(\tau)^{1+\ell}} \frac{E_{k'}(z)E_{14-k'}(\tau)}{j(\tau)-j(z)} e^{-2\pi i m \tau} \, d\tau.
\end{equation*}
We move the contour downward to a height of $A'$. As we do so, each pole in the region
\begin{equation*}
\left\lbrace -\frac{1}{2} < \operatorname{Re}(\tau) < \frac{1}{2}, A' < \operatorname{Im}(\tau) < A \right\rbrace
\end{equation*}
contributes to the value of the integral; these poles occur when $\tau$ is equivalent to $z$ under the action of $\SL_2(\mathbb{Z})$.  If a pole occurs with real part $-\frac{1}{2}$, we modify the contour to include small semicircles in the usual way.

We allow $A'$ to vary depending on $z$. This is to ensure that if $z$ is close to $\frac{\pi}{2}$, then the residue term from $\tau = \frac{z}{z-1}$ does not appear. We choose $A' = .75$ if $\frac{\pi}{2} < \theta \leq 1.9$, picking up residues from $\tau = z$ and $\tau = -\frac{1}{z}$, and we let $A' = .65$ if $\frac{7\pi}{12} \leq \theta < \frac{2\pi}{3}$, picking up an additional residue at $\tau = \frac{z}{z+1}$. The overlap of the two intervals is necessary to obtain interlacing of the zeros.  Applying the Residue Theorem and taking absolute values as in \cite{DJ08}, we obtain
\begin{multline}\label{bigIneq_1.9}
\displaystyle\left|e^{\frac{ik\theta}{2}}e^{-2\pi m\sin{\theta}}f_{k,m}(e^{i\theta}) -2\cos\left(\frac{k\theta}{2} - 2\pi m\cos{\theta}\right)\right| \\ \leq \max_{|x| \leq \frac{1}{2}} e^{-2\pi m (\sin\theta - .75)} \left| \dfrac{\Delta(e^{i \theta})}{\Delta(x +.75i)} \right|^\ell \left| \dfrac{E_{k'}(e^{i \theta}) E_{14-k'}(x+.75i)}{\Delta(x+.75i) (j(x+.75i) - j(e^{i \theta}))} \right|
\end{multline}
when $\frac{\pi}{2} < \theta \leq 1.9$, and
\begin{multline}\label{bigIneq2_7pi12}
\displaystyle\left|e^{\frac{ik\theta}{2}}e^{-2\pi m\sin{\theta}}f_{k,m}(e^{i\theta}) -2\cos\left(\frac{k\theta}{2} - 2\pi m\cos{\theta}\right)
- (-1)^m\frac{e^{-\pi m (2\sin\theta - \tan(\frac{\theta}{2}))}}{(2\cos(\frac{\theta}{2}))^k} \right| \\ \leq \max_{|x| \leq \frac{1}{2}} e^{-2\pi m (\sin\theta - .65)} \left| \dfrac{\Delta(e^{i \theta})}{\Delta(x +.65i)} \right|^\ell \left| \dfrac{E_{k'}(e^{i \theta}) E_{14-k'}(x+.65i)}{\Delta(x+.65i) (j(x+.65i) - j(e^{i \theta}))} \right|
\end{multline}
when $\frac{7\pi}{12} \leq \theta < \frac{2\pi}{3}$.

Note that for any modular form $g$ of weight $k$, the function $e^{\frac{ik\theta}{2}} g(e^{i\theta})$ is real-valued for $\theta \in I$, so the left-hand sides of (\ref{bigIneq_1.9}) and (\ref{bigIneq2_7pi12}) are absolute values of real-valued functions of $\theta$.  Thus, these inequalities give approximations for the modular forms $f_{k,m}(z)$ by the trigonometric function $2\cos\left(\frac{k\theta}{2} - 2\pi m\cos{\theta}\right)$, whose zeros are proved to interlace in the next section.  To prove Theorem \ref{BigTheorem1}, we show that the right-hand sides of equations (\ref{bigIneq_1.9}) and (\ref{bigIneq2_7pi12}) are exponentially decaying functions in the weight $k$, preserving the interlacing for $k$ sufficiently large.  To prove interlacing in the first interval is straightforward. On the other hand, the additional residue term in the second interval shifts the zeros of $G_k(z)$ away from the zeros of the cosine function, necessitating more care.  Computing the interlacing of the zeros of the $G_k(z)$ for all smaller $k$ proves interlacing for all $k \geq 4$.

The proof of Theorem \ref{BigTheorem2} proceeds along similar lines, using the fact that the right-hand sides of (\ref{bigIneq_1.9}) and (\ref{bigIneq2_7pi12}) are also exponentially decaying in $m$.

\section{Interlacing for Cosine Functions}
In this section, we show that the cosine functions obtained in the residue calculation have zeros that interlace.  We define
\begin{equation}
b(\theta) = \frac{k\theta}{2}-2\pi m \cos\theta.
\end{equation}
As we deal with variants of $b(\theta)$ in which $k$ is replaced by $k+12$ or $m$ by $m+1$, we similarly define
\begin{equation}
b_{k+12}(\theta) = \frac{(k+12)\theta}{2}-2\pi m \cos\theta
\end{equation}
and
\begin{equation}
b_{m+1}(\theta) = \frac{k\theta}{2}-2\pi (m+1) \cos\theta.
\end{equation}
When it is clear from context, we write $b_*(\theta)$ to mean either $b_{k+12}(\theta)$ or $b_{m+1}(\theta)$. Note that $\cos(b_*(\theta))$ has one more zero in $I$ than does $\cos(b(\theta))$.

We first prove that the zeros of $\cos(b(\theta))$ and $\cos(b_*(\theta))$ interlace on $I$.

\begin{lemma}\label{CosZeros}
If $m \geq |\ell|-\ell$, then the zeros of $\cos(\frac{k\theta}{2}-2\pi m \cos\theta)$ interlace on $I$ with the zeros of $\cos(\frac{(k+12)\theta}{2}-2\pi m \cos\theta)$ and with the zeros of $\cos(\frac{k\theta}{2}-2\pi (m+1) \cos\theta)$.
\end{lemma}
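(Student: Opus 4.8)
The plan is to reduce the statement to elementary facts about the strictly increasing phase $b(\theta)$ and the difference function
\[ F(\theta) := b_*(\theta) - b(\theta), \]
which equals $6\theta$ when $b_* = b_{k+12}$ and $-2\pi\cos\theta$ when $b_* = b_{m+1}$. First I would check that, under the hypothesis $m \geq |\ell| - \ell$, a direct computation gives $b'(\theta) = \tfrac{k}{2} + 2\pi m\sin\theta > 0$ and likewise $b_*'(\theta) > 0$ for every $\theta \in I$: the bound on $m$ is exactly what is needed to absorb a possibly negative $\tfrac{k}{2}$, using $\sin\theta > \tfrac{\sqrt 3}{2}$ on $I$ (the only degenerate case, $k=m=0$, being trivial since then $\cos b\equiv 1$). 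Hence $b$ and $b_*$ are strictly increasing on $I$, every zero of $\cos(b(\theta))$ and of $\cos(b_*(\theta))$ in $I$ is simple, and, as already observed, $\cos(b_*(\theta))$ has exactly one more zero in $I$ than $\cos(b(\theta))$. The one genuinely arc-specific input is that $\sin(F(\theta))$ has constant sign on $I$: for $\theta \in (\tfrac{\pi}{2}, \tfrac{2\pi}{3})$ we have $6\theta \in (3\pi, 4\pi)$ and $-2\pi\cos\theta \in (0,\pi)$, so $\sin(6\theta) < 0$ and $\sin(-2\pi\cos\theta) > 0$ throughout $I$; in particular $\sin F$ never vanishes there. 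It is here that the length $\tfrac{2\pi}{3} - \tfrac{\pi}{2} = \tfrac{\pi}{6}$ and the precise endpoints of the arc enter in an essential way, since they also force $F$ to be increasing on $I$ with total variation exactly $\pi$.

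The engine of the argument is a local computation. If $\theta_0 \in I$ is a zero of $\cos(b(\theta))$, write $b(\theta_0) = \tfrac{\pi}{2} + n\pi$; then
\[ \cos\bigl(b_*(\theta_0)\bigr) = \cos\Bigl(\tfrac{\pi}{2} + n\pi + F(\theta_0)\Bigr) = -(-1)^n \sin\bigl(F(\theta_0)\bigr), \]
and symmetrically, at a zero $\phi_0$ of $\cos(b_*(\theta))$ with $b_*(\phi_0) = \tfrac{\pi}{2} + n\pi$ one gets $\cos(b(\phi_0)) = (-1)^n \sin(F(\phi_0))$. Since $\sin F$ is nonvanishing of constant sign, these are nonzero, so $\cos b$ and $\cos b_*$ have no common zero in $I$. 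Moreover, at two consecutive zeros $\theta_j < \theta_{j+1}$ of $\cos(b(\theta))$ the integer $n$ increments by one (as $b$ is increasing), so $\cos(b_*(\theta))$ takes opposite nonzero values there and hence vanishes at least once in between; to see it vanishes \emph{exactly} once, I would note that $b_*(\theta_{j+1}) - b_*(\theta_j) = \pi + \bigl(F(\theta_{j+1}) - F(\theta_j)\bigr) < 2\pi$ because $F$ increases by strictly less than its total variation $\pi$ across the gap, so $\cos b_*$ has at most two zeros there and parity forces exactly one. The same computation with the roles of $b$ and $b_*$ reversed shows $\cos b$ vanishes at least once between any two consecutive zeros of $\cos b_*$.

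Finally I would assemble these into interlacing. Writing the zeros of $\cos b$ in $I$ as $\theta_1 < \dots < \theta_N$, the previous paragraph shows each of the $N-1$ bounded gaps contains exactly one of the $N+1$ zeros of $\cos b_*$; the two remaining zeros of $\cos b_*$ therefore lie in $(\inf I, \theta_1) \cup (\theta_N, \sup I)$, and neither tail can contain two of them, since two zeros of $\cos b_*$ in a common tail would be consecutive and would force, by the reversed local computation, a zero of $\cos b$ strictly between them — contradicting the minimality of $\theta_1$ or the maximality of $\theta_N$. Hence exactly one zero of $\cos b_*$ lies in each tail, the full list of zeros strictly alternates as $\phi_1 < \theta_1 < \phi_2 < \dots < \theta_N < \phi_{N+1}$, and the interlacing follows (the cases $N \leq 1$ being immediate from the same observations). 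I expect the main obstacle to be precisely this last bookkeeping at the endpoints of $I$: the local sign analysis cleanly controls everything between zeros of $\cos b$, but upgrading that to genuine interlacing, rather than merely matching zero counts, requires the monotonicity of $F$ together with the reversed-roles computation to pin the one extra zero of $\cos b_*$ to the correct side.
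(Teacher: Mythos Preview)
Your argument is correct. The underlying input is the same as the paper's---both rest on $b$ and $b_*$ being strictly increasing and on $F=b_*-b$ sweeping out an interval of length exactly $\pi$ on $I$ (the paper encodes this as the sandwich $b(\theta)<b_{k+12}(\theta)-3\pi<b(\theta)+\pi$, respectively $b(\theta)<b_{m+1}(\theta)<b(\theta)+\pi$, which is equivalent to your observation that $\sin F$ is nonvanishing of fixed sign). The organization, however, is different. The paper verifies four sufficient conditions in turn---first zero in $I$ belongs to $\cos b_*$, last zero belongs to $\cos b_*$, no common zeros, exactly one zero of $\cos b$ between consecutive zeros of $\cos b_*$---and notes that the ``last zero'' step requires adjustments depending on $k'$. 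Your explicit identity $\cos(b_*(\theta_0))=-(-1)^n\sin F(\theta_0)$ disposes of non-coincidence and alternation at once, and your counting-plus-tails argument (using that $\cos b_*$ has one more zero, then ruling out two surplus zeros in a single tail via the reversed-roles identity) replaces the paper's direct endpoint analysis and its $k'$ casework. The price is that you import the zero-count discrepancy $N+1$ versus $N$ as an input from the surrounding text rather than obtaining it as a byproduct; the paper's four-condition format does not need that count explicitly.
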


\begin{proof}
It is clear that to have interlacing the following four conditions are sufficient:
\begin{itemize}
\item\label{first} The first zero in $I$ belongs to $\cos(b_*(\theta))$.
\item\label{second} The last zero in $I$ belongs to $\cos(b_*(\theta))$.
\item\label{fourth} The zeros of $\cos(b_*(\theta))$ and $\cos(b(\theta))$ in $I$ are never equal.
\item\label{third} Between two consecutive zeros of $\cos(b_*(\theta))$ there is exactly one zero of $\cos(b(\theta))$.
\end{itemize}
We prove each of these assertions in turn. It is helpful to work with
\begin{equation*}\label{b_k star}
b_{k+12}(\theta) - 3\pi,
\end{equation*}
because \[\cos( b_{k+12}(\theta) - 3\pi) = -\cos(b_{k+12}(\theta)),\]
so $\cos(b_{k+12}(\theta))$ and $\cos(b_{k+12}(\theta) - 3\pi)$ have the same zeros.  At the endpoints of $I$, we have
\begin{equation}\label{startingValues}
b_{k+12}\left(\frac{\pi}{2}\right) - 3\pi = b_{m+1}\left(\frac{\pi}{2}\right) = b\left(\frac{\pi}{2}\right) = \frac{k\pi}{4}
\end{equation}
and
\begin{equation}\label{endingValues}
b_{k+12}\left(\frac{2\pi}{3}\right) - 3\pi = b_{m+1}\left(\frac{2\pi}{3}\right) = b\left(\frac{2\pi}{3}\right) + \pi.
\end{equation}
By taking derivatives, we see that $b_*(\theta)$ and $b(\theta)$ are monotonically increasing on $I$ for $m \geq |\ell|-\ell$, and that $b'(\theta) < b_*'(\theta)$ for all $\theta \in I$.  Thus, equations (\ref{startingValues}) and (\ref{endingValues}) imply that for all $\theta \in I$, we have
\begin{align}
b(\theta) < b_{k+12}(\theta) - 3\pi < b(\theta) + \pi, \label{BiggerThan1} \\
b(\theta) < b_{m+1}(\theta) < b(\theta) + \pi. \label{BiggerThan2}
\end{align}

Since $b(\frac{\pi}{2})=b_*(\frac{\pi}{2}) = \frac{k\pi}{4}$, the first zeros of $\cos(b(\theta))$ and $\cos(b_*(\theta))$ on $I$ occur when $b(\theta)$ and $b_*(\theta)$ are equal to $\frac{2n+1}{2}\pi$, where $2n+1$ is the first odd integer greater than $\frac{k}{2}$.  Let $\alpha$ be the first zero of $\cos(b_*(\theta))$ on $I$, so that $b_*(\alpha) = \frac{2n+1}{2}\pi$. Then $b(\alpha) < \frac{2n+1}{2}\pi$ by (\ref{BiggerThan1}) and (\ref{BiggerThan2}), so the first zero of $\cos(b_*(\alpha))$ occurs before the first zero of $\cos(b(\alpha))$. This proves the first of the assertions.

The proof of the second assertion follows similarly, though we must make adjustments depending on $k'$.  To see that the zeros cannot be equal, we set $\cos(b(\theta))=\cos(b_*(\theta)) = 0$; equality can only hold for $\theta$ not in $I$.

There can be at most one zero of $\cos(b(\theta))$ between any two consecutive zeros of $\cos(b_*(\theta))$; otherwise,  $b(\theta)$ must increase faster than $b_*(\theta)$ between the zeros, a contradiction. Let $\alpha_1, \alpha_2$ be two consecutive zeros of $b_{m+1}(\theta)$, so that $b_{m+1}(\alpha_1) = b_{m+1}(\alpha_2) - \pi = \frac{n\pi}{2}$ for some integer $n$.  Applying~(\ref{BiggerThan2}) shows that $b(\alpha_1) < \frac{n\pi}{2} < b(\alpha_2)$, so there must exist a point $\beta$ in the interval $(\alpha_1, \alpha_2)$ with $b(\beta) = \frac{n\pi}{2}$ and $\cos(b(\beta)) = 0$.  A similar argument shows that a zero of $\cos(b(\theta))$ appears between every two zeros of $\cos(b_{k+12}(\theta) - 3\pi)$ in $I$.
\end{proof}

In showing interlacing for the $f_{k,m}(z)$, we will need bounds on the distances between the zeros of the approximating cosine functions.  The following proposition gives a preliminary estimate on the distances between zeros.
\begin{proposition}\label{MaxCosZero}
Suppose $m \geq |\ell|-\ell$.  The distance between two consecutive zeros of $\cos(b(\theta))$ is less than or equal to
$\dfrac{2\pi}{k + 2\sqrt{3}m\pi}$.
\end{proposition}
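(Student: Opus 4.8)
The plan is to combine the monotonicity of $b$ on $I$ with the mean value theorem and a crude lower bound on $b'$. Differentiating $b(\theta)=\frac{k\theta}{2}-2\pi m\cos\theta$ gives
\[
b'(\theta)=\frac{k}{2}+2\pi m\sin\theta .
\]
By the argument already carried out in the proof of Lemma \ref{CosZeros}, the hypothesis $m\geq|\ell|-\ell$ guarantees that $b$ is monotonically increasing on $I=(\frac{\pi}{2},\frac{2\pi}{3})$. Consequently, if $\theta_1<\theta_2$ are two consecutive zeros of $\cos(b(\theta))$ in $I$, then $b(\theta_1)$ and $b(\theta_2)$ are consecutive odd multiples of $\frac{\pi}{2}$, so $b(\theta_2)-b(\theta_1)=\pi$. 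The mean value theorem then produces $\xi\in(\theta_1,\theta_2)\subset I$ with $(\theta_2-\theta_1)\,b'(\xi)=\pi$.

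Next I would bound $b'(\xi)$ from below. On $I$ the sine function is decreasing, so $\sin\xi\geq\sin\frac{2\pi}{3}=\frac{\sqrt{3}}{2}$; since $m\geq|\ell|-\ell\geq 0$, this yields
\[
b'(\xi)=\frac{k}{2}+2\pi m\sin\xi\;\geq\;\frac{k}{2}+\sqrt{3}\,\pi m\;=\;\frac{1}{2}\bigl(k+2\sqrt{3}\,m\pi\bigr).
\]
Combining the last two displays gives $\theta_2-\theta_1=\pi/b'(\xi)\leq \dfrac{2\pi}{k+2\sqrt{3}\,m\pi}$, which is the asserted estimate.

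There is essentially no serious obstacle here; the only points deserving a word of care are that $b$ be genuinely monotone on $I$, so that the increment of $b$ between consecutive zeros of $\cos(b(\theta))$ is exactly $\pi$ rather than something smaller — this is precisely the content of the hypothesis $m\geq|\ell|-\ell$ and was verified in Lemma \ref{CosZeros} — and that the denominator $k+2\sqrt{3}\,m\pi$ be positive, which is immediate from $b'(\xi)>0$. I would also remark that the bound is sharp: when $m=0$ the function $b$ is linear, its consecutive zeros are spaced exactly $2\pi/k$ apart, and $2\pi/(k+2\sqrt{3}\,m\pi)=2\pi/k$, so the inequality cannot be improved in general, which explains why the statement is phrased with ``$\leq$''.
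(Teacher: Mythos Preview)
Your proof is correct and follows essentially the same route as the paper: compute $b'(\theta)=\frac{k}{2}+2\pi m\sin\theta$, bound it below on $I$ by $\frac{k}{2}+\sqrt{3}\pi m$ using $\sin\theta\geq\frac{\sqrt{3}}{2}$, and use that $b$ must increase by $\pi$ between consecutive zeros of $\cos(b(\theta))$ to obtain the stated bound. The paper's version is terser and does not invoke the mean value theorem by name, but the argument is the same; your added remark on sharpness at $m=0$ is a nice observation not present in the original.
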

\begin{proof}
This follows from bounding
\begin{equation*}
b{'}(\theta) = \frac{k}{2} + 2\pi m\sin \theta
\end{equation*}
beneath on $I$ by $\dfrac{k}{2} + \pi m \sqrt{3}$.  Note that the condition on $m$ means that this derivative is always positive.  As $\theta$ moves from one zero of $\cos(b(\theta))$ to the next, $b(\theta)$ must increase by $\pi$; the lower bound on the derivative gives an upper bound on the distance between zeros.
\end{proof}
We use Proposition \ref{MaxCosZero} to prove a stronger result.

\begin{lemma}\label{ShortestDist}
Suppose that $m \geq |\ell|-\ell$, and if $b_*(\theta) = b_{k+12}(\theta)$, suppose that $k \geq 0$.  The shortest distance in $I$ between a zero of $\cos(b(\theta))$ and a zero of $\cos(b_*(\theta))$ is between the first zero of $\cos(b_*(\theta))$ and the first zero of $\cos(b(\theta))$, or between the last zero of $\cos(b(\theta))$ and the last zero of $\cos(b_*(\theta))$.
\end{lemma}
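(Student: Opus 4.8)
The plan is to label the zeros, reduce the statement to two monotonicity facts about the gaps between consecutive zeros, and then prove each; the second will be the main obstacle. By Lemma~\ref{CosZeros} the zeros of $\cos(b(\theta))$ and $\cos(b_*(\theta))$ interlace on $I$, with the first and last both belonging to $\cos(b_*(\theta))$. Write $\beta_1 < \cdots < \beta_N$ for the zeros of $\cos(b_*(\theta))$ in $I$ and $\gamma_1 < \cdots < \gamma_{N-1}$ for those of $\cos(b(\theta))$, so that $\beta_1 < \gamma_1 < \beta_2 < \cdots < \gamma_{N-1} < \beta_N$, and set $d_j = \gamma_j - \beta_j$ and $e_j = \beta_{j+1} - \gamma_j$. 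Since the zeros interlace, the distance from any zero of one function to the nearest zero of the other is one of the $d_j$ or $e_j$, so the shortest distance in the statement equals $\min_j \min(d_j, e_j)$; here $d_1$ is the distance between the first zeros and $e_{N-1}$ the distance between the last zeros. It therefore suffices to show that $(d_j)$ is increasing and $(e_j)$ is decreasing.

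For the set-up, in the case $b_* = b_{k+12}$ I replace $b_*$ by $b_{k+12}(\theta) - 3\pi$, which has the same zeros. Then by~(\ref{startingValues}), (\ref{endingValues}) and~(\ref{BiggerThan1})--(\ref{BiggerThan2}) the function $g(\theta) := b_*(\theta) - b(\theta)$ increases strictly from $0$ to $\pi$ on $I$, while $b'(\theta) = \tfrac{k}{2} + 2\pi m \sin\theta$ and $b_*'(\theta)$ are positive on $I$ (by the condition $m \ge |\ell|-\ell$, as in Proposition~\ref{MaxCosZero}, together with $k \ge 0$ in the $b_{k+12}$ case) and nonincreasing there, since $\sin$ decreases on $I$. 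Let $\Gamma$ and $B$ be the inverses of $b|_I$ and $b_*|_I$, and let $p_1 < \cdots < p_N$ be the points of $\tfrac{\pi}{2} + \pi\Z$ lying in $\bigl(b(\tfrac{\pi}{2}), b_*(\tfrac{2\pi}{3})\bigr)$, so $p_{j+1} = p_j + \pi$, $\gamma_j = \Gamma(p_j)$, and $\beta_j = B(p_j)$. From $0 < g < \pi$ one gets $B(p) < \Gamma(p) < B(p+\pi)$, so $d_j = d(p_j)$ and $e_j = e(p_j)$ for $d(p) := \Gamma(p) - B(p)$ and $e(p) := B(p+\pi) - \Gamma(p)$.

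The first claim is easy: $d'(p) = 1/b'(\Gamma(p)) - 1/b_*'(B(p))$, and since $B(p) < \Gamma(p)$ and $b_*'$ is nonincreasing, $b_*'(B(p)) \ge b_*'(\Gamma(p)) = b'(\Gamma(p)) + g'(\Gamma(p)) > b'(\Gamma(p))$, so $d'(p) > 0$ and $d_1 < \cdots < d_{N-1}$. The second claim asks for $e'(p) = 1/b_*'(B(p+\pi)) - 1/b'(\Gamma(p)) < 0$, i.e.\ $b_*'(B(p+\pi)) > b'(\Gamma(p))$, and this is the crux: because $B(p+\pi) > \Gamma(p)$ and $b_*'$ is nonincreasing, the value $b_*'(B(p+\pi))$ is already smaller than $b_*'(\Gamma(p))$, and only the pointwise surplus $b_*' - b' = g'$ can recover the difference. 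I would estimate the shortfall by
\[
b'(\Gamma(p)) - b'(B(p+\pi)) = \int_{\Gamma(p)}^{B(p+\pi)} \bigl(-b''(\theta)\bigr)\,d\theta \le \pi m\,\bigl(B(p+\pi)-\Gamma(p)\bigr) = \pi m\, e(p),
\]
using $-b''(\theta) = -2\pi m\cos\theta \le \pi m$ on $I$, and then note that $e(p) < B(p+\pi) - B(p)$ is less than the spacing of consecutive zeros of $\cos(b_*(\theta))$; Proposition~\ref{MaxCosZero}, applied with $k \mapsto k+12$ or $m \mapsto m+1$, bounds that spacing by roughly $\tfrac{1}{\sqrt 3\, m}$, so $\pi m\, e(p) < \tfrac{\pi}{\sqrt 3}$. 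On the other hand $g'(\theta) \equiv 6$ when $b_* = b_{k+12} - 3\pi$ and $g'(\theta) = 2\pi\sin\theta \ge \pi\sqrt 3$ when $b_* = b_{m+1}$, both strictly larger than $\tfrac{\pi}{\sqrt 3}$, so
\[
b_*'(B(p+\pi)) = b'(B(p+\pi)) + g'(B(p+\pi)) \ge \bigl(b'(\Gamma(p)) - \tfrac{\pi}{\sqrt 3}\bigr) + g'(B(p+\pi)) > b'(\Gamma(p)).
\]
(When $m = 0$ this is immediate since $b'' \equiv 0$; when $b_* = b_{m+1}$ with $k < 0$ the same computation goes through, using the sharper bound $g' \ge \pi\sqrt 3$ and the constraint $m \ge |\ell|-\ell$ to control $k$.) Hence $e_1 > \cdots > e_{N-1}$, and combining the two claims gives $\min_j\min(d_j,e_j) = \min(d_1, e_{N-1})$, which is exactly the assertion. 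The delicate comparison $b_*'(B(p+\pi)) > b'(\Gamma(p))$ is the only real difficulty; everything else is bookkeeping about the interlacing pattern and elementary estimates on the derivatives.
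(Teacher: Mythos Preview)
Your proof is correct and follows essentially the same route as the paper. The paper deduces the lemma from a proposition asserting precisely your two monotonicity claims: in the paper's notation, for three consecutive zeros $\alpha_1<\alpha_2<\alpha_3$ of $\cos(b_*(\theta))$ and two consecutive zeros $\beta_1<\beta_2$ of $\cos(b(\theta))$ interlaced between them, one has $\beta_1-\alpha_1<\beta_2-\alpha_2$ and $\alpha_2-\beta_1>\alpha_3-\beta_2$. Their proof of that proposition is the same derivative comparison you carry out---show $b_*'$ at the later zero exceeds $b'$ at the earlier zero because the pointwise surplus $b_*'-b'$ ($=6$ or $\ge\pi\sqrt{3}$) outweighs the drop in $b'$ over an interval whose length is controlled by Proposition~\ref{MaxCosZero} and whose second derivative is bounded by $\pi m$. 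Your inverse-function packaging (differentiating $d(p)$ and $e(p)$) is a cleaner way to say the same thing, but the substantive estimate is identical.
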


The lemma is clearly true when $\cos(b(\theta))$ has only one zero in $I$, and is an immediate consequence of the following proposition.

\begin{proposition}\label{ZerosMoveAround}
Suppose that $m \geq |\ell|-\ell$ and let $\alpha_1, \alpha_2, \alpha_3$ be three consecutive zeros of $\cos(b_{*}(\theta))$ in $I$ and $\beta_1,\beta_2$ be two consecutive zeros of $\cos(b(\theta))$ in $I$ such that $\alpha_1 < \beta_1 < \alpha_2 < \beta_2 < \alpha_3$. If $b_*(\theta) = b_{k+12}(\theta)$, suppose that $k \geq 0$.  Then $\beta_1-\alpha_1 < \beta_2-\alpha_2$, and $\alpha_2-\beta_1 > \alpha_3-\beta_2$.
\end{proposition}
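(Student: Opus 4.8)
The plan is to reduce both inequalities to soft monotonicity and concavity properties of $b$ and $b_*$, together with one genuinely quantitative comparison of derivatives in which essentially all the content lies. I first normalize: when $b_* = b_{k+12}$ I replace it by $b_{k+12} - 3\pi$ (which has the same cosine zeros), so that in either case I work with $b$ and a function $c$ for which, by the proof of Lemma~\ref{CosZeros}, one has on $I$ that $b$ and $c$ are strictly increasing, $b(\theta) < c(\theta) < b(\theta) + \pi$, $b'(\theta) < c'(\theta)$, and $\delta(\theta) := c(\theta) - b(\theta)$ increases from $0$ to $\pi$. I also record that $b$ and $c$ are concave on $I$: writing $m_* = m$ in the weight case and $m_* = m+1$ in the index case, $b''(\theta) = 2\pi m\cos\theta \le 0$ and $c''(\theta) = 2\pi m_*\cos\theta \le 0$ there since $\cos\theta \le 0$ on $I$, so $b'$ and $c'$ are nonincreasing; in particular the gaps between consecutive zeros of $\cos(b)$, and of $\cos(c)$, grow from left to right across $I$. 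Finally, from $\alpha_1 < \beta_1 < \alpha_2$, the monotonicity of $b$ and $c$, and $0 < \delta < \pi$, a short computation forces $c(\alpha_1) = b(\beta_1) =: p$ (both are odd multiples of $\tfrac{\pi}{2}$ lying within $\pi$ of each other on either side), whence $c(\alpha_2) = b(\beta_2) = p + \pi$ and $c(\alpha_3) = p + 2\pi$.

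For the first inequality I define $\phi$ on $[\alpha_1, \beta_1]$ by $b(\phi(\theta)) = b(\theta) + \pi$. Then $\phi(\beta_1) = \beta_2$, while $b(\alpha_2) - b(\alpha_1) = \int_{\alpha_1}^{\alpha_2} b' < \int_{\alpha_1}^{\alpha_2} c' = c(\alpha_2) - c(\alpha_1) = \pi$ gives $\phi(\alpha_1) > \alpha_2$. Since $\phi(\theta) > \theta$ on $[\alpha_1, \beta_1]$ and $b'$ is nonincreasing, $\phi'(\theta) = b'(\theta)/b'(\phi(\theta)) \ge 1$, so $\beta_2 - \phi(\alpha_1) = \phi(\beta_1) - \phi(\alpha_1) \ge \beta_1 - \alpha_1$; together with $\phi(\alpha_1) > \alpha_2$ this gives $\beta_2 - \alpha_2 > \beta_1 - \alpha_1$.

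For the second inequality I first note that adding $\beta_2 - \alpha_2$ to both sides shows $\alpha_2 - \beta_1 > \alpha_3 - \beta_2$ is equivalent to $\beta_2 - \beta_1 > \alpha_3 - \alpha_2$: the gap between the two zeros of $\cos(b)$ exceeds the gap between the second and third zeros of $\cos(c)$. (This reduced form, with $\alpha_3 - \alpha_2 \ge \alpha_2 - \alpha_1$ from the concavity of $c$, also implies the first inequality.) I then define $\chi \colon [\beta_1, \beta_2] \to [\alpha_2, \alpha_3]$ by $c(\chi(\theta)) = b(\theta) + \pi$; by the configuration $\chi(\beta_1) = \alpha_2$ and $\chi(\beta_2) = \alpha_3$, and $c(\theta) < b(\theta) + \pi$ forces $\chi(\theta) > \theta$. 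Implicit differentiation gives $\chi'(\theta) = b'(\theta)/c'(\chi(\theta))$, so $\alpha_3 - \alpha_2 = \int_{\beta_1}^{\beta_2} \chi'(\theta)\, d\theta$, and it suffices to prove the pointwise bound
\[ b'(\theta) < c'(\chi(\theta)) \qquad \text{for all } \theta \in [\beta_1, \beta_2]. \]

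This pointwise bound is the heart of the matter, and the step I expect to be the main obstacle: it is exactly where the slow oscillation of $\cos(b)$ (encoded in $b' < c'$) competes with the lengthening of the period of $\cos(c)$ as $\theta \to \tfrac{2\pi}{3}$ (encoded in the concavity of $c$), and where the explicit forms of $b$ and $c$ and the hypotheses on $k$ must be used. To prove it, set $\theta_2 = \chi(\theta)$ and write $c'(\theta_2) - b'(\theta) = g(\theta_2) - \bigl(b'(\theta) - b'(\theta_2)\bigr)$, where $g := c' - b'$ equals $6$ in the weight case and $2\pi\sin\theta$ in the index case. From $b'(\theta) = \tfrac{k}{2} + 2\pi m\sin\theta$ and $-\cos\theta \le \tfrac{1}{2}$ on $I$ one gets $b'(\theta) - b'(\theta_2) = 2\pi m(\sin\theta - \sin\theta_2) \le \pi m(\theta_2 - \theta)$, while $c(\theta_2) - c(\theta) = \pi - \delta(\theta) < \pi$ together with $c'$ positive and nonincreasing on $I$ gives $\theta_2 - \theta < \pi / c'(\tfrac{2\pi}{3})$, where $c'(\tfrac{2\pi}{3})$ equals $\tfrac{k+12}{2} + \pi\sqrt{3}\, m$ in the weight case and $\tfrac{k}{2} + \pi\sqrt{3}\,(m+1)$ in the index case. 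Substituting, $c'(\theta_2) > b'(\theta)$ reduces to the elementary inequalities $3k + 36 + m\,(6\pi\sqrt{3} - \pi^2) > 0$ in the weight case and $\tfrac{\pi\sqrt{3}}{2}\,k + 2\pi^2 m + 3\pi^2 > 0$ in the index case; the first holds because $k \ge 0$ and $6\sqrt{3} > \pi$, the second because $m \ge |\ell| - \ell$ forces $k \ge -6m$ and $2\pi > 3\sqrt{3}$. Integrating the pointwise bound over $[\beta_1, \beta_2]$ then finishes the second inequality, and with it the proposition.
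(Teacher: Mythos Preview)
Your proof is correct. The approach differs from the paper's in organization and, for the first inequality, in substance. The paper treats both inequalities uniformly: it reduces the first to $\alpha_2 - \alpha_1 < \beta_2 - \beta_1$ and the second to $\beta_2 - \beta_1 > \alpha_3 - \alpha_2$, and establishes each by showing $b_*'(\theta_1) > b'(\theta_2)$ for all $\theta_1$ in the relevant $\alpha$-interval and all $\theta_2$ in the $\beta$-interval, using Proposition~\ref{MaxCosZero} and the bound $|b_*''| \le m_*\pi$ to control how far $b_*'$ can drop. Your argument for the second inequality is essentially the same quantitative comparison, packaged via the map $\chi$ and the pointwise bound $b'(\theta) < c'(\chi(\theta))$; the resulting numerical inequalities match. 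Where you genuinely diverge is the first inequality: your argument with $\phi$ uses only the soft facts that $b' < c'$ and that $b$ is concave on $I$, avoiding the quantitative derivative comparison entirely. This is a cleaner proof of that half and makes clear that the hypothesis $k \ge 0$ in the weight case is needed only for the second inequality. The paper's uniform treatment, on the other hand, is shorter to write down once the derivative comparison is in hand, since both inequalities then follow by the same one-line length count.
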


This proposition says that as we examine an increasing sequence of intervals whose endpoints are zeros of $\cos(b_*(\theta))$, the zero of $\cos(b(\theta))$ in each interval moves farther from the left-hand side of the interval and closer to the right-hand side of the interval.

\begin{proof}
We begin in the case where we increase $k$ by 12 by comparing the derivatives $b'(\theta) = \frac{k}{2} + 2\pi m \sin\theta$ and $b_{k+12}'(\theta) = \frac{k}{2} + 6 + 2\pi m \sin\theta = b'(\theta)+6$.  If $m=0$, these derivatives are constant and $b_{k+12}'(\theta_1) > b'(\theta_2)$ for any $\theta_1, \theta_2 \in I$.   Since $b_{k+12}(\theta)$ increases by $\pi$ on $(\alpha_1, \alpha_2)$ and on $(\alpha_2, \alpha_3)$ and $b(\theta)$ increases by $\pi$ on $(\beta_1, \beta_2)$, we conclude that $\alpha_2 - \alpha_1 < \beta_2 - \beta_1$ and $\alpha_3 - \alpha_2 < \beta_2 - \beta_1$, giving the desired inequalities.

If $m > 0$, then $b_{k+12}'(\theta)$ and $b'(\theta)$ are both decreasing functions of $\theta$, as $\sin\theta$ goes from $1$ to $\frac{\sqrt{3}}{2}$ on $I$.  If $b'(\theta) > b_{k+12}'(\theta + \epsilon)$ for some $\epsilon$, then $b_{k+12}'$ has decreased by at least 6 on the interval $(\theta, \theta+\epsilon)$.  Since $b_{k+12}''(\theta) = 2\pi m \cos\theta$ is bounded below by $-m\pi$ on $I$, this can happen only if $\epsilon > \frac{6}{m\pi}$.

Since $\beta_1 < \alpha_2 < \beta_2$, by Proposition \ref{MaxCosZero} it is clear that
\begin{equation*}
\alpha_2 - \beta_1 < \beta_2 - \beta_1 < \dfrac{2\pi}{k + 2\sqrt{3}m\pi} < \dfrac{6}{m\pi}
\end{equation*}
for $k, m$ satisfying $m \geq |\ell|-\ell$.  Thus, we must have $b_{k+12}'(\alpha_2) > b'(\beta_1)$, and since $b'(\theta), b_{k+12}'(\theta)$ are decreasing, it must be true that $b_{k+12}'(\theta_1) > b'(\theta_2)$ for all $\theta_1 \in (\alpha_1, \alpha_2), \theta_2 \in (\beta_1, \beta_2)$.  Since $b_{k+12}(\theta)$ increases by $\pi$ on $(\alpha_1, \alpha_2)$ and $b(\theta)$ increases by $\pi$ on $(\beta_1, \beta_2)$, we conclude that $\alpha_2 - \alpha_1 < \beta_2 - \beta_1$ and $\beta_1 - \alpha_1 < \beta_2 - \alpha_2$.

To prove that $\alpha_2-\beta_1 > \alpha_3-\beta_2$, we show that $\beta_2 - \beta_1 > \alpha_3 - \alpha_2$.  We have \[\alpha_3 - \beta_1 < \alpha_3 - \alpha_1 < \frac{4\pi}{k + 12 + 2\sqrt{3}m\pi}.\]  If $k \geq 0$, this is less than $\frac{6}{m\pi}$, so $b_{k+12}'(\theta_1) > b'(\theta_2)$ for all $\theta_1 \in (\alpha_2, \alpha_3), \theta_2 \in (\beta_1, \beta_2)$.  Noting that $b_{k+12}(\theta)$ and $b_k(\theta)$ increase by $\pi$ on the appropriate intervals as before, we conclude that $\beta_2 - \beta_1 > \alpha_3 - \alpha_2$.

We now handle the case in which we increase $m$ by 1. We begin by noting that
\begin{equation*}
\left|b_{m+1}^{''}(\theta)\right| < (m+1)\pi,
\end{equation*}
and Proposition \ref{MaxCosZero} gives an upper bound on $\alpha_3 - \alpha_1$ of
\begin{equation*}
\alpha_3-\alpha_1 < \dfrac{4\pi}{k + 2\pi \sqrt{3} (m+1)},
\end{equation*}
so the most that $b_{m+1}^{'}(\theta)$ could decrease between $\alpha_1$ and $\alpha_3$ is
\begin{equation*}
\dfrac{-4(m+1){\pi}^2}{k + 2(m+1)\pi \sqrt{3}}.
\end{equation*}
Now $b_{m+1}^{'}(\alpha_1) > b^{'}(\alpha_1) +\pi \sqrt{3}$, and
\begin{equation*}
\pi \sqrt{3} - \dfrac{4(m+1){\pi}^2}{k + 2(m+1)\pi \sqrt{3}} > 0
\end{equation*}
if $m> \frac{-k\sqrt{3}}{2\pi} - 1$, which is always true when $m \geq |\ell|-\ell$.
When this condition on $m$ holds, $b_{m+1}'(\theta_1) > b'(\theta_2)$ for all $\theta_1, \theta_2$ in the interval $(\alpha_1, \alpha_3)$, and both $b_{m+1}(\theta)$ and $b(\theta)$ increase by $\pi$ on the appropriate intervals, so we get the desired inequalities.
\end{proof}

\section{Proof of Theorem \texorpdfstring{\ref{BigTheorem1}}{1.2}}

Recall that Theorem \ref{BigTheorem1} is a statement about $G_k(z) = f_{k,0}(z)$, so we use Lemmas \ref{CosZeros} and \ref{ShortestDist}, when needed, with $m = 0$. We proceed in two cases, depending on the value of $\theta$.

Suppose that $\theta \in (\frac{\pi}{2},1.9]$. From \eqref{bigIneq_1.9} it is clear that $G_k(z)$ may be approximated by $2\cos(\frac{k\theta}{2})$ if we convert the right-hand side into a suitably decreasing function of $k$. Lemma \ref{CosZeros} shows that the zeros of $2\cos(\frac{k\theta}{2})$ interlace, so the zeros of $G_k(z)$ will also interlace if $G_k(z)$ is sufficiently close to $2\cos\left(\frac{k\theta}{2}\right)$.

Consider the quotient of $\Delta$ functions on the right-hand side of \eqref{bigIneq_1.9}. By Proposition 2.2 of \cite{Getz04}, we know that for $\theta \in I$, we have
\[ \left| \Delta(e^{i\theta}) \right| < .00481,\]
and we check computationally that for $|x| \leq \frac{1}{2}$, we have
\[ \left| \Delta(x + .75i) \right| > 0.00721.\]
Together, for $\theta$ and $x$ in the appropriate ranges, these give
\begin{equation}\label{deltaQuot1.9}
\left| \dfrac{\Delta(e^{i \theta})}{\Delta(x +.75i)} \right| < .66713.
\end{equation}
Duke and the first author proved that for $\theta \in I$,
\begin{equation}\label{1.9_est}
\left| e^{\frac{ik \theta}{2}}G_k(e^{i \theta}) - 2\cos\left(\frac{k\theta}{2}\right) \right| < 1.985.
\end{equation}

Taking \eqref{bigIneq_1.9}, \eqref{deltaQuot1.9}, and \eqref{1.9_est} together shows that
\begin{equation}
\left| e^{\frac{ik \theta}{2}}G_k(e^{i \theta}) - 2\cos\left(\frac{k\theta}{2}\right) \right| < 2.97 (.66713)^\ell.
\end{equation}
Using the relation $k = 12\ell + k'$, it is clear that $\ell \geq \frac{k-14}{12}$, so we define $C(k)=2.97 (.66713)^{\frac{k-14}{12}}$. We see that $C(k) \geq 2.97 (.66713)^\ell$.

We now compute how far the zeros of $G_k(e^{i \theta})$ can stray from the zeros of $2\cos\left(\frac{k\theta}{2}\right)$.
Suppose that \[\left| e^{\frac{ik \theta}{2}}G_k(e^{i \theta}) - 2\cos\left(\frac{k\theta}{2}\right) \right| < C\]
for some constant $C<2$, and let $\alpha$ satisfy $2\cos\left(\frac{k\alpha}{2}\right)=0.$ Then a zero of $e^{\frac{ik \theta}{2}}G_k(e^{i \theta})$ appears in the interval $(\alpha - \varepsilon,\alpha + \varepsilon)$, where $\left|2\cos\left(\frac{k(\alpha \pm \varepsilon)}{2}\right)\right| = C.$ To get an upper bound on $\varepsilon$, consider the line intersecting $2\cos\left(\frac{k\theta}{2}\right)$ through the points $(\alpha-\frac{\pi}{k},\pm 2),(\alpha,0),(\alpha+\frac{\pi}{k}, \mp 2).$ The concavity of $2\cos\left(\frac{k\theta}{2}\right)$ near $\alpha$ implies that this line lies between $2\cos\left(\frac{k\theta}{2}\right)$ and the $\theta$-axis. Therefore, if $\beta$ is a point at which the value of the line is $\pm C$, then $|\beta-\alpha| > \varepsilon$. The absolute value of the slope of the line is $\frac{2k}{\pi}$, and it follows that
\[\varepsilon < \dfrac{\pi C}{2k}. \]

Lemma \ref{ShortestDist} gives us a lower bound on the distances between the zeros of $\cos(b(\theta))$ and $\cos(b_*(\theta))$; this lower bound is $\frac{\pi}{k} - \frac{\pi}{k+12}$ when $k' \in \{0,4,8\}$, and $2\left(\frac{\pi}{k} - \frac{\pi}{k+12}\right)$ when $k' \in \{ 6,10,14 \}$.  This can be seen by considering what happens at the endpoints of $I$ for $k' \equiv 0,2$ mod 4.  For instance, if $k \equiv 0 \pmod{4}$, then at $\theta = \frac{\pi}{2}$, we have that $\frac{k\theta}{2} = \frac{k\pi}{4}$ is an integer multiple of $\pi$, and only needs to increase by $\frac{\pi}{2}$ before $\cos(\frac{k\theta}{2})$ has a zero in $I$. When $k \equiv 2 \pmod{4}$, we see that $\frac{k\theta}{2}$ must increase by $\pi$ before a zero occurs.

Replacing $C$ with $C(k)$, we solve the inequality
\begin{equation*}
\dfrac{\pi C(k)}{2k} < \frac{1}{2} \left( \dfrac{\pi}{k}-\dfrac{\pi}{k+12} \right),
\end{equation*}
which is true when $k \geq 118$. This means that when $k \geq 118$, the zeros of $G_k(z), G_{k+12}(z)$ differ from the zeros of $\cos(b(\theta)), \cos(b_*(\theta))$ by an amount which is less than half the minimum distance between zeros of $\cos(b_*(\theta))$ and $\cos(b(\theta))$. The zeros of $G_k(z)$ and $G_{k+12}(z)$ therefore lie in disjoint, interlacing intervals, and must interlace on $(\frac{\pi}{2},1.9]$ for $k \geq 118$.

Now let $\theta \in [\frac{7\pi}{12},\frac{2\pi}{3})$. The method of the previous case must be modified, as we are dealing with a different approximating function for $G_k(z)$, given by
\begin{equation*}
H_k(\theta) = 2\cos\left(\frac{k\theta}{2}\right)+\dfrac{1}{(2\cos(\frac{\theta}{2}))^k}.
\end{equation*}
We require the following lemma.
\begin{lemma}
The zeros of $H_k(\theta)$ interlace with the zeros of $H_{k+12}(\theta)$ on $[\frac{7\pi}{12},\frac{2\pi}{3})$.
\end{lemma}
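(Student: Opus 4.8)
The plan is to put $H_k$ into a form whose zeros are manifestly governed by two monotone phase functions, and then to compare those phases for $k$ and $k+12$ just as in Lemma~\ref{CosZeros}. On $J=[\tfrac{7\pi}{12},\tfrac{2\pi}{3})$ we have $2\cos\tfrac\theta2>1$, so $\delta_k(\theta):=(2\cos\tfrac\theta2)^{-k}\in(0,1)$; writing $\delta_k=-2\cos a_k$ with $a_k(\theta):=\arccos(-\tfrac12\delta_k)\in(\tfrac\pi2,\tfrac{2\pi}3)$ and using $\cos A-\cos B=-2\sin\tfrac{A+B}2\sin\tfrac{A-B}2$ gives the exact factorization
\[
H_k(\theta)=2\bigl(\cos\tfrac{k\theta}2-\cos a_k(\theta)\bigr)=-4\,\sin\phi_k^+(\theta)\,\sin\phi_k^-(\theta),\qquad \phi_k^\pm(\theta):=\tfrac{k\theta}4\pm\tfrac12 a_k(\theta),
\]
so the zeros of $H_k$ on $J$ are exactly the solutions of $\phi_k^+\in\pi\Z$ together with those of $\phi_k^-\in\pi\Z$, and these two sets are disjoint because $\phi_k^+-\phi_k^-=a_k\notin\pi\Z$. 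I would first check that both phases are strictly increasing on $J$: from $\delta_k'=\tfrac k2\tan\tfrac\theta2\,\delta_k$ one gets $a_k'=\delta_k'/\sqrt{4-\delta_k^2}$, hence $(\phi_k^\pm)'=\tfrac k4\pm\tfrac12 a_k'$; the $+$ case is obviously positive, and the $-$ case is positive precisely when $\delta_k<2\cos\tfrac\theta2$, which is just a restatement of $\theta<\tfrac{2\pi}3$ and so holds on all of $J$. Everything above applies verbatim to $H_{k+12}$.

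For the comparison I would use $u=\phi_k^-(\theta)$ as a common, strictly increasing coordinate. Then $\phi_k^+=u+a_k$, and since $\delta_{k+12}=\delta_k(2\cos\tfrac\theta2)^{-12}<\delta_k$ we have $a_{k+12}<a_k$ and
\[
\phi_{k+12}^-=u+\tilde R,\qquad \phi_{k+12}^+=u+\tilde R+a_{k+12},\qquad \tilde R(\theta):=3\theta+\tfrac12(a_k-a_{k+12}).
\]
Here $a_k,a_{k+12}\in(\tfrac\pi2,\tfrac{2\pi}3)$; from $3\theta+\tfrac12(a_{k+12}-a_k)<3\theta<2\pi$ we get $\phi_{k+12}^+-\phi_k^+\in(\tfrac{5\pi}3,2\pi)$. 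The one bound requiring an argument is $\tilde R<2\pi$ (a priori $3\theta$ and $\tfrac12(a_k-a_{k+12})$ could sum past $2\pi$ near $\theta=\tfrac{2\pi}3$); I would get it from $\tilde R'=3+\tfrac12(a_k'-a_{k+12}')\ge0$, which reduces to $a_{k+12}'-a_k'\le6$, and this follows from $\delta_{k+12}<\delta_k$ together with the monotonicity of $x\mapsto x/\sqrt{4-x^2}$ and, once more, $\delta_k<2\cos\tfrac\theta2$; since $\tilde R\ge3\theta\ge\tfrac{7\pi}4$ we conclude $\phi_{k+12}^-\!-\phi_k^-\in[\tfrac{7\pi}4,2\pi)$. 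With these ranges in hand, a short check shows that every length-$\pi$ $u$-interval $[N\pi,(N+1)\pi)$ contains exactly one zero of each of $\sin\phi_k^-,\sin\phi_{k+12}^-,\sin\phi_k^+,\sin\phi_{k+12}^+$, occurring in that order, each adjacency being a strict inequality obtained by evaluating one of the increasing phases at another's zero and comparing with a multiple of $\pi$. Hence the zeros of $H_k$ and $H_{k+12}$ strictly alternate on $J$.

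It remains to pin down which of $H_k,H_{k+12}$ supplies the first and last zero inside $[\tfrac{7\pi}{12},\tfrac{2\pi}{3})$. At the left endpoint $\delta_k,\delta_{k+12}$ are exponentially small, so $a_k,a_{k+12}\approx\tfrac\pi2$ and the zeros there are controlled by $2\cos\tfrac{k\theta}2$ exactly as in Lemma~\ref{CosZeros}; at the right endpoint $H_k(\tfrac{2\pi}3)=2\cos\tfrac{k\pi}3+1=H_{k+12}(\tfrac{2\pi}3)$ and $\tilde R(\tfrac{2\pi}3)=2\pi$, which together with the alternation just established locates the last zeros. I expect this endpoint bookkeeping to be the main nuisance: as in the cosine case it branches on $k'$ (equivalently on $k\bmod 6$) through the position of $\tfrac{k\pi}4$ relative to $\pi\Z$, and the exponential‑smallness estimates at $\tfrac{7\pi}{12}$ weaken for small weights, so a short finite list of small $k$ would have to be confirmed by direct computation, exactly as for the interval $(\tfrac\pi2,1.9]$.
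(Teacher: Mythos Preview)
Your approach is genuinely different from the paper's and, in a sense, more ambitious. The paper does not prove this lemma from scratch at all: it simply observes that $H_k$ coincides with Nozaki's function $F_k$ from \cite{Noz08} in the special case $R_k\equiv 0$, and then invokes Nozaki's Lemmas~4.1--4.5, which already establish interlacing for $F_k$ and $F_{k+12}$. Your route via the exact factorization $H_k=-4\sin\phi_k^{+}\sin\phi_k^{-}$, with $\phi_k^{\pm}=\tfrac{k\theta}{4}\pm\tfrac12\arccos(-\tfrac12\delta_k)$, is self-contained and conceptually clean: once the four phase offsets $a_k$, $\tilde R$, $\tilde R+a_{k+12}$, and $\tilde R-(a_k-a_{k+12})$ are shown to lie in suitable subintervals of $(0,2\pi)$, the interlacing on the interior of $J$ drops out of the same monotone-phase bookkeeping as in Lemma~\ref{CosZeros}. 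Your terse justification of $a_{k+12}'-a_k'\le 6$ does unpack correctly (the $k\cdot(f(\delta_{k+12})-f(\delta_k))$ term is negative by monotonicity of $f(x)=x/\sqrt{4-x^2}$, and the remaining $12f(\delta_{k+12})\cdot\tfrac12\tan\tfrac\theta2$ is at most $6$ since $f(\delta_{k+12})<f(2\cos\tfrac\theta2)=\cot\tfrac\theta2$), so the monotonicity of $\tilde R$ and hence $\tilde R<2\pi$ are genuine. The trade-off is that the paper's citation handles the endpoint behaviour and all weights uniformly, whereas your sketch explicitly defers the boundary cases at $\tfrac{7\pi}{12}$ and $\tfrac{2\pi}{3}$ to a $k'$-by-$k'$ analysis and possible finite checks for small $k$; that is real work you have not yet done, and it is exactly the content of Nozaki's five lemmas. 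If you carry that endpoint analysis through, you would have an independent and arguably more transparent proof than the one the paper imports.
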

\begin{proof}
Lemmas 4.1, 4.2, 4.3, 4.4, and 4.5 in \cite{Noz08} prove that the zeros of $H_k(\theta)$ interlace with the zeros of $H_{k+12}(\theta)$; adding $(2\cos(\frac{\theta}{2}))^{-k}$ does not change the order of the zeros. The function $H_k(\theta)$ is closely related to Nozaki's function $F_k(\theta)$, defined as
\[F_k(\theta) = 2\cos\left(\frac{k\theta}{2}\right)+\dfrac{1}{(2\cos(\frac{\theta}{2}))^k} + R_k(\theta).\]
In using Nozaki's lemmas we may simply take $R_k(\theta) = 0$.
\end{proof}

The term $(2\cos(\frac{\theta}{2}))^{-k}$ is monotonically increasing, is very small for smaller $\theta$, and tends rapidly to 1 for $\theta$ close to $\frac{2\pi}{3}$. This residue term shifts the zeros of $G_k(z)$ away from the zeros of $\cos(b(\theta))$, but for large $k$ the effect is negligible unless $\theta$ is very near $\frac{2\pi}{3}$.

As before, we need a lower bound on the distance between the zeros of $H_k(\theta)$ and $H_{k+12}(\theta)$.
An easily adapted lemma from Nozaki (\cite{Noz08}, Lemma 4.1) shows that for a zero $\alpha^*$ of $2\cos(\frac{k\theta}{2})$ and the corresponding zero $\alpha$ of $H_k(\theta)$, we have
\begin{equation}\label{pi3kBound}
|\alpha-\alpha^*|<\frac{\pi}{3k}
\end{equation}
if $\alpha^* \geq \frac{7\pi}{12}$. This fact follows from the observations that $|2\cos(\frac{k}{2}(\alpha^* \pm \frac{\pi}{3k}))|=1$ and $0<(2\cos(\frac{\theta}{2}))^{-k}<1$ for $\theta \in I$. We will use this fact frequently to estimate quantities involving zeros of $H_k(\theta)$.

Let $\alpha$ denote a zero of $H_k(\theta)$ and $\beta$ an adjacent zero of $H_{k+12}(\theta)$. There are two cases to consider: intervals of the type $(\beta, \alpha)$, and intervals of the type $(\alpha, \beta)$. We will obtain lower bounds on the length of intervals of both types.

Consider first the $(\beta, \alpha)$ intervals.  We may view these essentially as intervals defined by zeros of $2\cos(\frac{k\theta}{2})$ and $2\cos(\frac{(k+12)\theta}{2})$, along with some zero shifts due to the presence of the $(2\cos(\frac{\theta}{2}))^{-k}$ term.  By Proposition~\ref{ZerosMoveAround}, the shortest such interval is the first after $\frac{7\pi}{12}$.

We proceed by cases, according to the congruence class of $k \pmod{12}$.
Suppose that $k' = 0$, so that $k = 12\ell$. We solve $\frac{7\pi (k+12)}{24} \leq \frac{\pi(2n+1)}{2}$ to find the smallest $n$ such that $\frac{\pi(2n+1)}{2}>\frac{7\pi}{12}$ is a zero of $2\cos(\frac{(k+12)\theta}{2})$, and similarly find the next zero of $2\cos(\frac{k\theta}{2})$.  If $\ell$ is even, then
the distance between these cosine zeros is
\begin{equation*}
\frac{(7\ell+1)\pi}{k}-\frac{7\pi}{12}=\frac{\pi}{k},
\end{equation*}
while if $\ell$ is odd, then the interval has length
\begin{equation*}
\frac{7\pi}{12}+\frac{2\pi}{k}-\frac{(7(\ell+1)+1)\pi}{k+12}=\frac{2\pi}{k}-\frac{\pi}{k+12}>\frac{\pi}{k}.
\end{equation*}
Therefore, by Proposition \ref{ZerosMoveAround} and \eqref{pi3kBound}, a lower bound on the length of a $(\beta,\alpha)$ interval is given by
\begin{equation*}
\frac{\pi}{k}-\frac{\pi}{3k}-\frac{\pi}{3(k+12)},
\end{equation*}
which is positive for all $k>0$.

This argument works for each value of $k'$. We have the following lower bounds for intervals of $(\beta,\alpha)$ type:
\begin{align}\label{LotsOfLowerBounds}
&\frac{\pi}{k}-\frac{\pi}{3k}-\frac{\pi}{3(k+12)}, \ (k' = 0), \\
&\frac{5\pi}{3k}-\frac{2\pi}{3(k+12)}-\frac{\pi}{3k}-\frac{\pi}{3(k+12)}, \ (k' = 4), \nonumber \\
&\frac{3\pi}{2k}-\frac{\pi}{2(k+12)}-\frac{\pi}{3k}-\frac{\pi}{3(k+12)}, \ (k' = 6), \nonumber \\
&\frac{4\pi}{3k}-\frac{\pi}{3(k+12)}-\frac{\pi}{3k}-\frac{\pi}{3(k+12)}, \ (k' = 8), \nonumber \\
&\frac{7\pi}{6k}-\frac{\pi}{6(k+12)}-\frac{\pi}{3k}-\frac{\pi}{3(k+12)}, \ (k' = 10), \nonumber \\
&\frac{11\pi}{6k}-\frac{5\pi}{6(k+12)}-\frac{\pi}{3k}-\frac{\pi}{3(k+12)}, \ (k' = 14) \nonumber.
\end{align}

The method for handling the $(\beta, \alpha)$ intervals cannot be easily adapted for $(\alpha, \beta)$ intervals, so we use a different approach.  Our general strategy of proof involves the function $H_k(\theta)+H_{k+12}(\theta)$.  If $\beta - \alpha < \frac{\pi}{3k}$, we show that this function is monotonically increasing or decreasing on the interval $(\alpha, \beta)$. We then obtain a lower bound on $|H_{k+12}(\alpha)|+|H_k(\beta)|$, which gives the change in the value of this function over the interval, and use the trivial bound on the derivative of $H_k(\theta)+H_{k+12}(\theta)$ given by
\begin{equation}\label{DerivBound}
\left|\frac{d}{d\theta}(H_k(\theta)+H_{k+12}(\theta))\right| \leq 4k+24
\end{equation}
to find a lower bound for $\beta - \alpha$.  On the other hand, if $\beta - \alpha \geq \frac{\pi}{3k}$, we may simply use $\frac{\pi}{3k}$ as a lower bound.

To see that $H_k(\theta)+H_{k+12}(\theta)$ is monotonic on the interval $(\alpha, \beta)$ when $\beta - \alpha < \frac{\pi}{3k}$, we note that the interval $(\alpha, \beta)$ is contained in the interval $(\alpha^*-\frac{2\pi}{3k},\alpha^*+\frac{2\pi}{3k})$.  On this larger interval, the absolute value of the derivative of $2\cos\left(\frac{k\theta}{2}\right)$ ranges from $\frac{k}{2}$ to $k$.  The absolute value of the derivative of $(2\cos(\theta/2))^{-k}$, on the other hand, is bounded above by its value at an upper bound for the largest possible $\beta$ of $\theta = \frac{2\pi}{3} - \frac{\pi}{k+12} + \frac{\pi}{3(k+12)}$; this value is at most $(.142)k$.  Thus, the derivative of the cosine term dominates in the interval, and $H_k(\theta)$ is monotonic; it follows that $H_k(\theta)+H_{k+12}(\theta)$ is monotonic on $(\alpha, \beta)$.

We now bound $|H_{k+12}(\alpha)|+|H_k(\beta)|$ when $\beta - \alpha < \frac{\pi}{3k}$. There are three cases to consider, based on the value of $k'$ mod 12, since the behavior of $H_k(\theta)$ depends heavily on $k'$. In each case there are two subcases, since for this type of interval, the zeros of $H_k(\theta)$ and $H_{k+12}(\theta)$ both shift to the left or both shift to the right from the zeros of $2\cos(\frac{k\theta}{2})$ and $2\cos(\frac{(k+12)\theta}{2})$.

\subsection*{Case 1: \texorpdfstring{$k' = 0, 6$}{k' = 0, 6}}
Consider first the subcase in which $\cos\left(\frac{k\theta}{2}\right)$ and $\cos\left(\frac{(k+12)\theta}{2}\right)$ are increasing at consecutive zeros, so that the zeros are shifted to the left by adding the extra term. For any $\gamma$ with $\alpha<\gamma<\beta,$ a lower bound on $|H_{k+12}(\alpha)|+|H_k(\beta)|$ is given by $H_k(\gamma)-H_{k+12}(\gamma)>2\cos(\frac{k\gamma}{2})-2\cos(\frac{(k+12)\gamma}{2})$, since $(2\cos(\frac{\theta}{2}))^{-k}-(2\cos(\frac{\theta}{2}))^{-k-12}$ is positive for all $\theta \in I$. By trigonometric identities this is equal to $4\sin(3\gamma)\sin(\frac{(k+6)\gamma}{2})$.

The function $4\sin(3\theta)\sin(\frac{(k+6)\theta}{2})$ has zeros on $I$ at
$\frac{2\pi}{k+6}(\frac{k+6}{3}-n)$, where $n$ is a nonnegative integer. In this case, with the zeros shifted left, $n$ is even. On the other hand, the zeros of $2\cos(\frac{k\theta}{2})$ are at $\theta=\frac{2\pi}{3}-\frac{(2n+1)\pi}{k} \geq \frac{7\pi}{12}$, with
$n \leq \frac{k-12}{24}$. Given this restriction on $n$, it is straightforward to confirm that
\begin{align*}
\frac{2\pi}{k+6}\left(\frac{k+6}{3}-n-1\right)&<\frac{2\pi}{3}-\frac{(2n+1)\pi}{k}-\frac{\pi}{3k}, \\
\frac{2\pi}{k+6}\left(\frac{k+6}{3}-n\right)&>\frac{2\pi}{3}-\frac{(2n+1)\pi}{k+12}+\frac{\pi}{3(k+12)},
\end{align*}
implying that the zeros $\alpha$ and $\beta$ of $H_k(\theta)$ and $H_{k+12}(\theta)$ both lie between the same two zeros of $4\sin(3\theta)\sin(\frac{(k+6)\theta}{2})$.

When the zeros $\alpha,\beta$ of $H_k(\theta)$ and $H_{k+12}(\theta)$ are shifted left, then the zero $\frac{2\pi}{3}-\frac{(2n+1)\pi}{k+12}$ of $\cos\left(\frac{(k+12)\theta}{2}\right)$ is
greater than $\beta$. Because of the parity of $n$, the derivative of $4\sin(3\theta)\sin(\frac{(k+6)\theta}{2})$ is negative at $\frac{2\pi}{3}-\frac{(2n+1)\pi}{k+12}$, implying that the function is positive on all of $(\alpha, \beta)$ and that a lower bound on $4\sin(3\gamma)\sin(\frac{(k+6)\gamma}{2})$ is given by
\begin{equation*}
4\sin\left(3\left( \frac{2\pi}{3}-\frac{(2n+1)\pi}{k+12}\right) \right)\sin\left(\frac{k+6}{2}\left(\frac{2\pi}{3}-\frac{(2n+1)\pi}{k+12} \right) \right),
\end{equation*}
which simplifies to
\begin{equation*}
4\sin\left(\frac{3(2n+1)\pi}{k+12} \right)\sin\left(\frac{(2n+1)(k+6)\pi}{2(k+12)}\right).
\end{equation*}
With the condition $n \leq \frac{k-12}{24}$, we find that
\begin{equation*}
\frac{\pi}{4}<\frac{(2n+1)(k+6)\pi}{2(k+12)}-n\pi<\frac{\pi}{2},
\end{equation*}
and since $n$ is even this implies $\sin(\frac{(2n+1)(k+6)\pi}{2(k+12)})>\frac{\sqrt{2}}{2}$. Therefore, a lower bound on $|H_{k+12}(\alpha)|+|H_k(\beta)|$ when $\alpha, \beta$ are shifted left is given by
\begin{equation*}
\sin\left(\frac{3\pi}{k+12} \right),
\end{equation*}
since $\sin(\frac{3(2n+1)\pi}{k+12})$ is increasing in $n$.

Now suppose that $\alpha,\beta$ are shifted right. Then a lower bound on $|H_{k+12}(\alpha)|+|H_k(\beta)|$ is given by $H_{k+12}(\gamma)-H_k(\gamma)$ for some $\alpha<\gamma<\beta$. Ignoring for the moment the exponential terms, we find a lower bound on $2\cos(\frac{(k+12)\gamma}{2})-2\cos(\frac{k\gamma}{2})$ by arguing as above to replace $\gamma$ by $\frac{2\pi}{3}-\frac{(2n+1)\pi}{k+12}+\frac{\pi}{3(k+12)}$. Inserting this for $\theta$ in the function $-4\sin(3\theta)\sin(\frac{(k+6)\theta}{2})$ and simplifying as before, we have
\begin{equation*}
-4\sin\left(\frac{2(3n+1)\pi}{k+12} \right)\sin\left(\frac{(k+6)(3n+1)\pi}{3(k+12)} \right).
\end{equation*}
Because $n$ is now odd, we find that $-4\sin(\frac{(k+6)(3n+1)\pi}{3(k+12)})>-4\cos(\frac{7\pi}{12})>1$, so this is bounded beneath by $\sin(\frac{2(3n+1)}{k+12})$.

Now we consider the term $(2\cos(\frac{\gamma}{2}))^{-k-12} - (2\cos(\frac{\gamma}{2}))^{-k} = -(2\cos(\frac{\gamma}{2}))^{-k} (1 - (2\cos(\frac{\gamma}{2}))^{-12})$. Since $(2\cos(\frac{\theta}{2}))^{-k}$ is increasing on $I$ and $1-(2\cos(\frac{\theta}{2}))^{-12}$ is decreasing on $I$, an upper bound on the absolute value of $-(2\cos(\frac{\gamma}{2}))^{-k}(1-(2\cos(\frac{\gamma}{2}))^{-12})$ is given by
\begin{equation*}
T_{0,6}(k,n)=\frac{1-\left(2\cos\left(\frac{1}{2}\left(\frac{2\pi}{3}-\frac{(2n+1)\pi}{k} \right)\right)\right)^{-12}}{\left(2\cos\left(\frac{1}{2}\left(\frac{2\pi}{3}-\frac{(2n+1)\pi}{k+12}+\frac{\pi}{3(k+12)}\right)
\right)\right)^{k}}.
\end{equation*}
Since $T_{0,6}(k,n)$ is decreasing in $n$ for fixed $k$, a lower bound on $|H_{k+12}(\alpha)|+|H_k(\beta)|$ is given by
\begin{equation*}
\sin\left(\frac{2(3n+1)}{k+12}\right)-T_{0,6}(k,1).
\end{equation*}
These two lower bounds, along with \eqref{DerivBound}, give a lower bound on $\beta - \alpha$ for $k \equiv 0, 6 \pmod{12}$ of the smaller of $\frac{\pi}{3k}$ and
\begin{equation*}
B_{0,6}(k)=\frac{1}{4k+24}\left(\sin\left(\frac{3\pi}{k+12}\right)-T_{0,6}(k,1) \right).
\end{equation*}
It is not difficult to verify the positivity of $B_{0,6}(k)$. Comparing $B_{0,6}(k)$ with the appropriate quantities from \eqref{LotsOfLowerBounds}, we find that $B_{0,6}(k)$ is indeed a lower bound on the zero distance.

We saw earlier that near zeros of $H_k(\theta)$, we have $|\frac{d}{d\theta}(H_k(\theta))|>\frac{7k}{20}$. If it is true that $\left| e^{ik \theta/2} G_k(e^{i \theta}) - H_k(\theta) \right| < C$, then this gives an upper bound on the distance a zero of $e^{ik \theta/2} G_k(e^{i \theta})$ can travel of $\frac{20C}{7k}.$ Performing calculations as in the case where $\theta \in (\frac{\pi}{2},1.9]$, we may take $C$ to be $2.24(.44)^{(k-6)/12}$. We then solve the inequality
\begin{equation*}
2.24(.44)^{(k-6)/12}\frac{20}{7k}<\frac{1}{2}B_{0,6}(k),
\end{equation*}
which holds for $k \geq 102$.

\subsection*{Case 2: \texorpdfstring{$k' = 4, 10$}{k' = 4, 10}}
This and the following case follow very similarly to the one above. Again we find that $\alpha,\beta$ lie between two zeros of $4\sin(3\theta)\sin(\frac{(k+6)\theta}{2})$, and we handle separately the cases in which $\alpha$ and $\beta$ are shifted left or right. If we define
\begin{equation*}
T_{4,10}(k,n)=\frac{1-\left(2\cos\left(\frac{1}{2}\left(\frac{2\pi}{3}-\frac{(6n+5)\pi}{3k} \right)\right)\right)^{-12}}{\left(2\cos\left(\frac{1}{2}\left(\frac{2\pi}{3}-\frac{(6n+5)\pi}{3(k+12)}+ \frac{\pi}{3(k+12)}\right)\right)\right)^{k}},
\end{equation*}
then a lower bound on zero distance is given by
\begin{equation*}
B_{4,10}(k)=\frac{1}{4k+24}\left(\sin\left(\frac{4\pi}{3(k+12)}\right)-T_{4,10}(k,0) \right).
\end{equation*}
Here $B_{4,10}(k)$ is positive for $k \geq 16$, and comparison with \eqref{LotsOfLowerBounds} shows that $B_{4,10}(k)$ is a lower bound on zero distance over the range of consideration. Similarly as above, we bound from beneath the absolute value of the derivative of $H_k(\theta)$, and find it is bounded by $\frac{7k}{20}$. We find that the inequality
\begin{equation*}
2.24(.44)^{(k-10)/12}\frac{20}{7k}<\frac{1}{2}B_{4,10}(k)
\end{equation*}
holds for $k \geq 128$.

\subsection*{Case 3: \texorpdfstring{$k' = 8, 14$}{k' = 8, 14}}
Proceeding as above, a lower bound on the distance between zeros of $H_k(\theta)$ and $H_{k+12}(\theta)$ is given by
\begin{equation*}
B_{8,14}(k)=\frac{1}{4k+24}\left(\sin\left(\frac{7\pi}{k+12} \right)-T_{8,14}(k,0) \right),
\end{equation*}
where
\begin{equation*}
T_{8,14}(k,n)=\frac{1-\left(2\cos\left(\frac{1}{2}\left(\frac{2\pi}{3}-\frac{(6n+7)\pi}{3k} \right)\right)\right)^{-12}}{\left(2\cos\left(\frac{1}{2}\left(\frac{2\pi}{3}- \frac{(6n+7)\pi}{3(k+12)}+\frac{\pi}{3(k+12)}\right)\right)\right)^{k}}.
\end{equation*}
The inequality here is
\begin{equation*}
2.24(.44)^{(k-14)/12}\frac{20}{7k}<\frac{1}{2}B_{8,14}(k),
\end{equation*}
which holds for $k \geq 98$.

Comparing our results from the two intervals, we find that the zeros of $G_k(z)$ interlace with the zeros of $G_{k+12}(z)$ on the lower boundary of the fundamental domain for $k \geq 128$. We have confirmed computationally that the zeros interlace for $k \leq 140$, and we have an appropriate intersection between our two intervals since Proposition \ref{MaxCosZero} implies that this intersection contains at least two zeros when $k \geq 94$. It follows that the zeros of $G_k(z)$ and $G_{k+12}(z)$ interlace on the lower boundary of the fundamental domain. \hfill $\qed$

\section{Proof of Theorem \texorpdfstring{\ref{BigTheorem2}}{1.3}}
For convenience we restate Theorem \ref{BigTheorem2}.
\begin{thma}
Let $\epsilon > 0,$ and fix $m \geq 0$ (resp. $k \in \Z$). Then the zeros of $f_{k,m}(z)$ interlace with the zeros of $f_{k+12,m}(z)$ (resp. $f_{k,m+1}(z)$) on the arc
\[\mathcal{A}_\epsilon =  \left\lbrace e^{i \theta} : \frac{\pi}{2} < \theta < \frac{2\pi}{3} - \epsilon \right\rbrace \]
for $k$ (resp. $m$) large enough.
\end{thma}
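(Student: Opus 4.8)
The plan is to follow the proof of Theorem~\ref{BigTheorem1}, but to exploit the truncation to $\mathcal{A}_\epsilon$ so as to bypass the delicate analysis of the auxiliary functions $H_k(\theta)$ in Section~4: once $\theta$ is bounded away from $\tfrac{2\pi}{3}$, \emph{every} term on the right-hand side of \eqref{bigIneq_1.9} and \eqref{bigIneq2_7pi12}, and also the extra residue term $(-1)^m e^{-\pi m(2\sin\theta-\tan(\theta/2))}/(2\cos(\tfrac{\theta}{2}))^k$ appearing on the left of \eqref{bigIneq2_7pi12}, decays exponentially in $k$ (resp.\ $m$) uniformly in $\theta$, so $f_{k,m}(e^{i\theta})$ is approximated directly by $2\cos(b(\theta))$ and we may appeal to the clean cosine interlacing of Lemmas~\ref{CosZeros} and~\ref{ShortestDist}. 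We may assume $\epsilon$ is small, say $\epsilon<\tfrac{2\pi}{3}-1.9$, since $\mathcal{A}_{\epsilon'}\subset\mathcal{A}_\epsilon$ when $\epsilon'>\epsilon$; then $(\tfrac{\pi}{2},1.9]$ and $[\tfrac{7\pi}{12},\tfrac{2\pi}{3}-\epsilon)$ cover $(\tfrac{\pi}{2},\tfrac{2\pi}{3}-\epsilon)$ and overlap on $[\tfrac{7\pi}{12},1.9]$, so we argue on each subinterval using \eqref{bigIneq_1.9} and \eqref{bigIneq2_7pi12} respectively.

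First I would record the uniform estimates on $\mathcal{A}_\epsilon$. Since $2\cos(\tfrac{\theta}{2})$ is decreasing and $2\sin\theta-\tan(\tfrac{\theta}{2})$ is nonnegative and decreasing on $I$, the extra residue term in \eqref{bigIneq2_7pi12} is bounded in absolute value by $\rho^k e^{-\pi m g_\epsilon}$, where $\rho=\rho(\epsilon):=\bigl(2\cos(\tfrac{\pi}{3}-\tfrac{\epsilon}{2})\bigr)^{-1}<1$ and $g_\epsilon:=2\sin(\tfrac{2\pi}{3}-\epsilon)-\tan(\tfrac{\pi}{3}-\tfrac{\epsilon}{2})>0$. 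Likewise $\sin\theta\geq\sin(\tfrac{2\pi}{3}-\epsilon)>\tfrac{\sqrt3}{2}$ on $\mathcal{A}_\epsilon$, so $\sin\theta-.75$ and $\sin\theta-.65$ are bounded below by positive constants, and the factor $e^{-2\pi m(\sin\theta-c)}$ on the right of \eqref{bigIneq_1.9}, \eqref{bigIneq2_7pi12}, with $c\in\{.65,.75\}$ the relevant contour height, is at most $e^{-2\pi m\delta}$ for some $\delta=\delta(\epsilon)>0$. The remaining $\Delta$- and Eisenstein-quotient factors are bounded exactly as in Section~4: when $k$ is large and $m$ is fixed, $\ell\to\infty$ and $|\Delta(e^{i\theta})/\Delta(x+ci)|^\ell$ decays geometrically while the rest is $O(1)$; when $m$ is large and $k$ (hence $\ell$) is fixed, that quotient is a fixed constant and all decay comes from $e^{-2\pi m\delta}$. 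In either regime there is a constant $C=C(k,m)$, exponentially small in $k$ (resp.\ $m$), with
\[
\Bigl| e^{ik\theta/2}e^{-2\pi m\sin\theta}f_{k,m}(e^{i\theta})-2\cos(b(\theta)) \Bigr|<C \qquad (\theta\in\mathcal{A}_\epsilon),
\]
and the same bound with $b$ replaced by $b_*$ for the companion form $f_{k+12,m}$ (resp.\ $f_{k,m+1}$).

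Next I would reproduce the zero-displacement argument of Section~4. By Lemma~\ref{CosZeros} the zeros of $\cos(b(\theta))$ and $\cos(b_*(\theta))$ interlace on $I$; by Lemma~\ref{ShortestDist} and Proposition~\ref{ZerosMoveAround} the shortest gap between them occurs at an endpoint of $I$, and computing it there (as was done for $m=0$ in Section~4, with $\tfrac{\pi}{k}-\tfrac{\pi}{k+12}$ replaced by its analogue for general $m$) gives a lower bound $G(k,m)$ for that gap of order $(k+\pi\sqrt3\,m)^{-2}$. Meanwhile $b'(\theta)=\tfrac{k}{2}+2\pi m\sin\theta$ is positive and monotone on $I$ once $m\geq|\ell|-\ell$; bounding it below by $\tfrac{k}{2}+\pi\sqrt3\,m$ and using the concavity of $2\cos(b(\theta))$ near its zeros exactly as in Section~4, a zero of $e^{ik\theta/2}e^{-2\pi m\sin\theta}f_{k,m}$ lies within $\varepsilon<\pi C/(k+2\pi\sqrt3\,m)$ of the corresponding zero of $2\cos(b(\theta))$, and similarly for the companion form. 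Since $\varepsilon/G(k,m)$ is, up to an absolute constant, at most $C\,(k+\pi\sqrt3\,m)$, which tends to $0$ because $C$ is exponentially small while $k+\pi\sqrt3\,m$ grows only linearly, the inequality $\varepsilon<\tfrac12 G(k,m)$ holds once $k$ (resp.\ $m$) is large enough. Then the zeros of $f_{k,m}$ and of its companion lie in disjoint, interlacing intervals; and since the overlap $[\tfrac{7\pi}{12},1.9]$ contains at least two zeros of $\cos(b(\theta))$ for large parameters by Proposition~\ref{MaxCosZero}, the interlacing patterns obtained on the two subintervals are consistent, giving interlacing on all of $\mathcal{A}_\epsilon$.

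The main obstacle is the second subinterval. The extra residue term in \eqref{bigIneq2_7pi12} is precisely what forces the elaborate $H_k(\theta)$-analysis, and the consequent loss of interlacing information near $\tfrac{2\pi}{3}$, in the proof of Theorem~\ref{BigTheorem1}: it tends to $1$ as $\theta\to\tfrac{2\pi}{3}$. One must therefore be careful to check that truncating to $\mathcal{A}_\epsilon$ makes this term genuinely exponentially small and uniform (which it does, via the bound $\rho^k e^{-\pi m g_\epsilon}$ above), and this is also exactly why the present method cannot reach all of $\mathcal{A}$. A minor secondary point is to confirm that the hypotheses of Lemmas~\ref{CosZeros} and~\ref{ShortestDist} ($m\geq|\ell|-\ell$, and $k\geq0$ in the $k\mapsto k+12$ case) are automatically met in the limit: $k$ large forces $\ell\geq0$ and hence $m\geq0=|\ell|-\ell$, while for fixed $k$ any sufficiently large $m$ exceeds $|\ell|-\ell$.
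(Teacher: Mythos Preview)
Your strategy is the same as the paper's: on $\mathcal{A}_\epsilon$ the extra residue term from \eqref{bigIneq2_7pi12} is absorbed into an exponentially decaying error, so $f_{k,m}$ is compared directly with $2\cos(b(\theta))$, and one then plays an exponentially small displacement against a rational lower bound on the cosine zero gap.

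The one place where your sketch is genuinely thinner than the paper is the sentence ``computing it there (as was done for $m=0$ in Section~4 \ldots) gives a lower bound $G(k,m)$ of order $(k+\pi\sqrt{3}m)^{-2}$.'' For $m=0$ the zeros of $\cos(k\theta/2)$ are explicit, so this is trivial; for $m>0$ they are not, and one needs an argument. The paper supplies it by taking the first-order Taylor approximation $L_{k,m}(\theta)=\tfrac{k\pi}{4}+\tfrac{k+4m\pi}{2}(\theta-\tfrac{\pi}{2})$ of $b(\theta)$ at $\theta=\tfrac{\pi}{2}$, and then showing via the Mean Value Theorem together with the derivative comparison of Proposition~\ref{ZerosMoveAround} that the gap between the first zeros of $\cos(b)$ and $\cos(b_*)$ is at least the gap between the first zeros of $\cos(L_{k,m})$ and $\cos(L_{*})$, namely $\tfrac{\pi}{k+4m\pi}-\tfrac{\pi}{k+12+4m\pi}$ in the $k\mapsto k+12$ case. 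The $m\mapsto m+1$ case requires a separate argument, since the Taylor remainders for $b$ and $b_{m+1}$ no longer coincide; the paper handles this with a second-order remainder estimate. An analogous computation is done near $\theta=\tfrac{2\pi}{3}$. Your claimed order $(k+m)^{-2}$ is correct, but the justification is not a direct copy of the $m=0$ case.

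A minor point: the paper replaces your concavity-based displacement bound by an explicit lower bound on $|\sin(b(\theta))|$ on the interval $\bigl(\alpha^*-\tfrac{\pi}{2(|k|+4\pi m)},\,\alpha^*+\tfrac{\pi}{2(|k|+4\pi m)}\bigr)$. Your chord argument is in fact valid here as well, once one notes that $b$ is concave on $I$ (so $b^{-1}$ is convex and the inequality $\sin u\ge 2u/\pi$ transfers), but you should say this rather than simply invoking ``concavity of $2\cos(b(\theta))$,'' which by itself does not hold uniformly on both sides of a zero.
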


Our proof follows the outlines of the proof above;  the most significant differences involve the lower bounds on the distances between zeros. We take linear approximations to $b(\theta)$ and $b_*(\theta)$ and use those approximations to derive lower bounds on the distance between zeros. We require the hypotheses for Lemma \ref{ShortestDist} to hold; we then need only find such a bound near $\theta=\frac{\pi}{2}$ and $\theta = \frac{2\pi}{3}$.

We first determine the bound for zeros near $\theta = \frac{\pi}{2}$. Taking the first order Taylor series approximation for

\begin{equation*}
b(\theta) = \frac{k\theta}{2} - 2\pi m \cos \theta
\end{equation*}
gives us

\begin{equation*}
L_{k,m}(\theta) = \frac{k\pi}{4} + \frac{k+4m \pi}{2} \left(\theta - \frac{\pi}{2} \right).
\end{equation*}
When we increase $k$ by 12, the linear approximations to $b(\theta)$ and $b_{k+12}(\theta)$ have the same error term.

Write $b(\theta)=L_{k,m}(\theta)-R_{m}(\theta)$. Note that $R_{m}(\theta)$ is increasing and positive on $I$, since $b(\theta)$ is concave down. Let $\alpha_1,\alpha_2$ be the first zeros of $\cos(L_{k+12,m}(\theta)),\cos(L_{k,m}(\theta))$ in $I$, respectively, and let $\beta_1,\beta_2$ be the first zeros on $I$ of $\cos(b_{k+12}(\theta)),\cos(b(\theta))$. We then have, for integers $n_1$ and $n_2$,
\begin{align*}
b_{k+12}(\alpha_1)&=\frac{2n_1+1}{2}\pi-R_m(\alpha_1),\\
b_{k+12}(\beta_1) &= \frac{2n_1+1}{2}\pi,\\
b(\alpha_2) &= \frac{2n_2+1}{2}\pi-R_m(\alpha_2),\\
b(\beta_2) &= \frac{2n_2+1}{2}\pi.
\end{align*}
Now we find the slopes of the lines between $(\alpha_1,b_{k+12}(\alpha_1))$ and $(\beta_1,b_{k+12}(\beta_1))$, $(\alpha_2,b(\alpha_2))$ and $(\beta_2,b(\beta_2))$, and apply the Mean Value Theorem. The slope can be taken to be the value of the derivative at a point in the interval, and by the proof of Proposition \ref{ZerosMoveAround} the derivative of $2\cos(b_*(\theta))$ is greater than the derivative of $2\cos(b(\theta))$ in the appropriate intervals for $k$ large enough. Thus, for large $k$ we have that
\begin{equation*}
\frac{R_m(\alpha_1)}{\beta_1-\alpha_1}>\frac{R_m(\alpha_2)}{\beta_2-\alpha_2},
\end{equation*}
which implies $\beta_2-\alpha_2>\beta_1-\alpha_1$. This in turn implies that $\beta_2-\beta_1 > \alpha_2 - \alpha_1$, so the distance between the zeros of $\cos(L_{k,m}(\theta))$ and $\cos(L_{k+12,m}(\theta))$ is less than the distance between the zeros of $\cos(b(\theta))$ and $\cos(b_{k+12}(\theta))$.  Computing the distance between the zeros of $\cos(L_{k,m}(\theta)),\cos(L_{k+12,m}(\theta))$ we get a lower bound of
\begin{equation*}
\dfrac{\pi}{k + 4m \pi} - \dfrac{\pi}{k+12 + 4m \pi}
\end{equation*}
for the distance between zeros near $\theta = \frac{\pi}{2}$.

The argument for increasing $m$ by 1 is not exactly analogous because the error terms are no longer identical. We use the Taylor series approximation for $b(\theta)$ and use the fact that near $\theta=\frac{\pi}{2}$ we have $b(\theta)$ close to its first-order approximation.

Assume $k>0,$ since the case $k<0$ is similar. Additionally, assume $k \equiv 0 \pmod{4}$; if $k \equiv 2 \pmod{4}$ we find that the lower bound on the zeros is greater than the lower bound when $k \equiv 0 \pmod{4}$. Bounding from beneath the derivative of $b(\theta)$, we see that because $k \equiv 0 \pmod{4}$, the first zero of $2\cos(b(\theta))$ in $I$ is less than $\frac{\pi}{2}+\frac{\pi}{k+2\pi \sqrt{3} m}$. By Taylor's Theorem,
\begin{equation*}
b(\theta) = L_{k,m}(\theta) + \frac{b''(\xi)}{2}\left(\theta-\frac{\pi}{2} \right)
\end{equation*}
for some $\xi \in (\frac{\pi}{2},\frac{\pi}{2}+\frac{\pi}{k+2\pi \sqrt{3} m})$. This gives the inequality
\begin{align*}
L_{k,m}(\theta) &\leq b(\theta) + \pi m \left(\frac{\pi}{k+2\pi \sqrt{3} m} \right)^2 \\
&= b(\theta) + \frac{1}{12\pi m} - \frac{k}{12\sqrt{3}\pi^2 m^2} + \cdots \leq b(\theta) + \frac{1}{m},
\end{align*}
for $m$ large enough with respect to $k$.

Since the first zero of $2\cos(L_{k,m}(\theta))$ in $I$ is at $\frac{\pi(2+k+4\pi m)}{2(k+4\pi m)}$, we see that the first zero of $2\cos(b(\theta))$ in $I$ is less than $\frac{\pi(2+k+4\pi m)}{2(k+4\pi m)}+\frac{2}{m(k+2\pi\sqrt{3}m)}$. Thus, a lower bound on the distance between the first zeros of $2\cos(b(\theta))$ and $2\cos(b_{m+1}(\theta))$ is given by
\begin{align*}
\frac{\pi(2+k+4\pi m)}{2(k+4\pi m)}
-\frac{\pi(2+k+4\pi (m+1))}{2(k+4\pi (m+1))}
-\frac{2}{(m+1)(k+2\pi\sqrt{3}(m+1))},
\end{align*}
which is positive for $m$ large enough with respect to $k$.

We use similar arguments for the lower bound near $\theta = \frac{2\pi}{3}$, and find that the lower bound between zeros, whether we increase $k$ or increase $m$, is given by a decreasing rational function in $k$ and $m$.

Now we pick $\epsilon > 0$, which is fixed for the remainder of the proof. From equations \eqref{bigIneq_1.9} and \eqref{bigIneq2_7pi12} and the proof of Theorem \ref{DJTheorem} we obtain
\[\left|e^{\frac{ik \theta}{2}}e^{-2\pi m\sin{\theta}}f_{k,m}(e^{i\theta}) -2\cos\left(\frac{k\theta}{2} - 2\pi m\cos{\theta}\right) \right|   < 2.97 (.49)^m (.67)^\ell + \frac{e^{-\pi m (2\sin\rho - \tan(\frac{\rho}{2}))}}{(2\cos(\frac{\rho}{2}))^k},\]
where $\rho = \frac{2\pi}{3} - \epsilon$. We do so by comparing the bounds for the two different intervals and choosing our bound to be larger than both of them.  Note that each term of the right side is exponentially decaying in both $m$ and $k$.

Suppose more generally that $|e^{\frac{ik \theta}{2}}e^{-2\pi m\sin{\theta}}f_{k,m}(e^{i\theta}) -2\cos\left(\frac{k\theta}{2} - 2\pi m\cos{\theta}\right) |<D$.  We want to derive an upper bound in terms of $D$ on the distance a zero of $f_{k,m}(e^{i\theta})$ can be from a zero of $2\cos(b(\theta))$. We will do this by bounding from beneath the absolute value of the derivative of $2\cos(b(\theta))$ on intervals around each of its zeros.  If $D$ is small enough, the zeros of $f_{k, m}(e^{i\theta})$ must lie in these intervals, and we may argue as in the proof of Theorem~\ref{BigTheorem1} to obtain a bound involving an exponentially decaying quantity.

Suppose that $2\cos(b(\alpha^*))=0$ for some $\alpha^*$.  We calculate that
\begin{equation*}
\frac{d}{d\theta}\left(2\cos(b(\theta)) \right)=-(k+4\pi m\sin\theta)\sin(b(\theta));
\end{equation*}
using the above bounds, we must bound $|\frac{d}{d\theta}\left(2\cos(b(\theta)) \right)|$ from below on a suitable interval. Trivially bounding the derivative of $2\cos(b(\theta))$, we see that $\frac{\pi}{2(|k|+4\pi m)}$ is an insufficient change in absolute value for $2\cos(b(\theta))$ to reach an extreme. Thus we consider the interval $(\alpha^*- \frac{\pi}{2(|k|+4\pi m)}, \alpha^*+ \frac{\pi}{2(|k|+4\pi m)})$.
Standard formulas give
\begin{equation*}
\sin\left(b\left(\alpha^* \pm \frac{\pi}{2(|k|+4\pi m)}\right)\right)=\sin x\cos y \pm \cos x\sin y,
\end{equation*}
where $x=\frac{k\alpha^*}{2}-2\pi m\cos \alpha^* \cos(\frac{\pi}{2(|k|+4\pi m)})$ and $y=\frac{k\pi}{4(|k|+4\pi m)}+2\pi m\sin \alpha^* \sin(\frac{\pi}{2(|k|+4\pi m)})$. When $k$ or $m$ is large enough, $\cos(\frac{\pi}{2(|k|+4\pi m)})$ is very near 1, so $\cos(x) \approx 0$ and $|\sin(x)| \approx 1$. When $m$ is fixed and $k$ increases, we see that $y$ approaches $\frac{\pi}{4}$, so $|\sin(b(\alpha^* \pm \frac{\pi}{2(|k|+4\pi m)}))|$ is close to $\frac{\sqrt{2}}{2}$. When $k$ is fixed and $m$ increases, $y$ approaches $\frac{\pi \sin \alpha^*}{4}$, and  $|\sin(b(\alpha^* \pm \frac{\pi}{2(k+4\pi m)}))|$ is close to or greater than $\cos(\frac{\pi}{4}) = \frac{\sqrt{2}}{2}$, since $\alpha^*$ may range from $\frac{\pi}{2}$ to $\frac{2\pi}{3}$.
With this bound at the endpoints of our interval, we let $E$ be a positive constant smaller than $\frac{\sqrt{2}}{2}$, such that for all $k,m$ under consideration with $k$ fixed and $m$ increasing (or $m$ fixed and $k$ increasing) and $\theta \in (\alpha^*-\frac{\pi}{2(|k|+4\pi m)},\alpha^*+\frac{\pi}{2(|k|+4\pi m)})$ we have
\begin{equation*}
\left|\frac{d}{d\theta}\left(2\cos(b(\theta)) \right) \right| = |\sin(b(\theta))| |k + 4\pi m \sin\theta| > E\left(k+2\sqrt{3}\pi m\right).
\end{equation*}

Letting $M(k,m)$ be the minimum of the four lower bounds on the distance between zeros we calculated above and replacing $D$ with our exponential quantities from before, we find that the zeros interlace when
\begin{equation*}
\frac{1}{E(k+2\sqrt{3}\pi m)} \left( 2.97 (.49)^m (.67)^\ell + \frac{e^{-\pi m (2\sin\rho - \tan(\frac{\rho}{2}))}}{(2\cos(\frac{\rho}{2}))^k}  \right) < \frac{1}{2} M(k,m).
\end{equation*}
Because the left-hand side has exponential decay in $k$ and $m$ while the right-hand side is a rational function of $k$ and $m$, we see that the inequality holds for all but finitely many $k$ and $m$, so the zeros of $f_{k,m}(z)$ interlace with the zeros of $f_{k+12,m}(z)$ or $f_{k,m+1}(z)$ on $(\frac{\pi}{2},\frac{2\pi}{3} - \epsilon)$ for $k,m$ sufficiently large. \hfill $\qed$

\bibliographystyle{amsplain}

\begin{thebibliography}{1}

\bibitem{AKN97}
T.~Asai, M.~Kaneko, and H.~Ninomiya, \emph{Zeros of certain modular functions
  and an application}, Comment. Math. Univ. St. Paul. \textbf{46} (1997),
  no.~1, 93--101.

\bibitem{DJ08}
W.~Duke and P.~Jenkins, \emph{On the zeros and coefficients of certain weakly
  holomorphic modular forms}, Pure Appl. Math. Q. \textbf{4} (2008), no.~4,
  1327--1340.

\bibitem{Gek01}
E.-U. Gekeler, \emph{Some observations on the arithmetic of {E}isenstein series
  for the modular group {${\rm SL}(2,{\mathbb{Z}})$}}, Arch. Math. (Basel)
  \textbf{77} (2001), no.~1, 5--21, Festschrift: Erich Lamprecht.

\bibitem{Getz04}
J.~Getz, \emph{A generalization of a theorem of {R}ankin and
  {S}winnerton-{D}yer on zeros of modular forms}, Proc. Amer. Math. Soc.
  \textbf{132} (2004), no.~8, 2221--2231 (electronic).

\bibitem{JR11}
P.~Jenkins and J.~Rouse, \emph{Bounds for coefficients of cusp forms and
  extremal lattices}, Bull. Lond. Math. Soc. \textbf{43} (2011), no.~5,
  927--938.

\bibitem{Jer12}
J.~Jermann, \emph{Interlacing property of the zeros of {$j_n(\tau)$}}, Proc.
  Amer. Math. Soc. \textbf{140} (2012), no.~10, 3385--3396.

\bibitem{Noz08}
H.~Nozaki, \emph{A separation property of the zeros of {E}isenstein series for
  {${\rm SL}(2,\mathbb{Z})$}}, Bull. Lond. Math. Soc. \textbf{40} (2008), no.~1,
  26--36.

\bibitem{RSD70}
F.~K.~C. Rankin and H.~P.~F. Swinnerton-Dyer, \emph{On the zeros of
  {E}isenstein series}, Bull. London Math. Soc. \textbf{2} (1970), 169--170.

\end{thebibliography}

\end{document}